\theoremstyle{plain}
\newtheorem{Theorem}{Theorem}
\newtheorem{theorem}[Theorem]{Theorem}
\newtheorem{corollary}[Theorem]{Corollary}
\newtheorem{lemma}[Theorem]{Lemma}
\theoremstyle{definition}
\newtheorem{Definition}[Theorem]{Definition}
\newtheorem{example}[Theorem]{Example}
\newtheorem{definition}[Theorem]{Definition}
\newtheorem{remark}[Theorem]{Remark}
\theoremstyle{remark}
\newcommand{\N}{\mathbb{N}}     
\newcommand{\R}{\mathbb{R}}     
\def\r{\R}
\def\muplu{\mu^{+}}       
\def\mumin{\mu^{-}}         
\def\nuplu{\nu^{+}}       
\def\numin{\nu^{-}}         
\def\laplu{\la^{+}}  
\def\lamin{\la^{-}}  
\newcommand{\calA}{\mathscr{A}}
\newcommand{\calC}{\mathscr{C}}
\newcommand{\calK}{\mathscr{K}}
\newcommand{\calO}{\mathscr{O}}
\DeclareMathOperator{\cl}{cl}				
\newcommand{\TM}{TM} 
\newcommand{\M}{M} 
\newcommand{\STM}{\mathbf{STM}} 
\newcommand{\SDTM}{\mathbf{SDTM}} 
\newcommand{\DTM}{DTM} 
\newcommand{\ox}{\calO(X)}
\newcommand{\cx}{\calC(X)}
\newcommand{\kx}{\calK(X)}
\newcommand{\ax}{\calA(X)}
\newcommand{\bccx}{\calK_{c}(X)}
\newcommand{\bcx}{\kx}
\newcommand{\bcsx}{\calK_{s}(X)}
\newcommand{\ksx}{\calK_{s}(X)}
\newcommand{\bosx}{\calO_{s}^{*}(X)}
\newcommand{\bcox}{\calK_{0}(X)}
\def\E{{{\mathcal{E}}}}
\def\C{{{\mathcal{C}}}}
\renewcommand{\O}{\emptyset}
\def\E{\mathcal{E}}
\def\sm{\setminus}
\def\cl{\overline}
\def\se{\subseteq}
\def\sc{\sqcup}
\def\bsc{\bigsqcup}
\def\eps{\epsilon}
\def\norm{\parallel}
\def\la1{\lambda_1}
\def\la2{\lambda_2}
\def\la0{\lambda_{0}}
\def\la{\lambda}
\begin{document}
\title{Signed topological measures on locally compact spaces}
\author{S. V. Butler, University of California, Santa Barbara } 
\address{Department of Mathematics,  University of California, Santa Barbara,  552 University Rd, Isla Vista, CA 93117, USA}
\email{svtbutler@ucsb.edu}
\date{February 19, 2019}
\subjclass[2010]{Primary 28C15; Secondary 28C99}
\keywords{signed deficient topological measure; signed topological measure; positive, negative, total variation of a signed deficient topological measure}
\maketitle
\begin{abstract}
In this paper we define and study signed deficient topological measures and signed topological measures (which generalize signed measures) 
on locally compact spaces.
We prove that a signed deficient topological measure is $\tau$-smooth on open sets and 
$\tau$-smooth on compact sets. We show that the family of signed measures that are differences of two Radon measures 
is properly contained in the family of signed topological measures, which in turn is properly contained in the family of signed deficient topological measures. 
Extending known results for compact spaces, 
we prove that a signed topological measure is the difference of its positive and negative variations if at least one variation is finite; 
we also show that the total variation is the sum of the positive and negative variations.
If the space is locally compact, connected, locally connected, and has the Alexandroff one-point compactification of genus 0,    
a signed topological measure of finite norm can be represented as a difference of two topological measures. 
\end{abstract}

\section{Introduction}

The study of topological measures (initially called quasi-measures) began with papers by 
J. F. Aarnes \cite{Aarnes:TheFirstPaper}, \cite{Aarnes:Pure}, and \cite{Aarnes:ConstructionPaper}.
There are now many papers devoted to topological measures and corresponding non-linear functionals; 
their application to symplectic topology has been studied in numerous papers (beginning with \cite{EntovPolterovich}) 
and a monograph (\cite{PoltRosenBook}). 
The natural generalizations of topological measures are signed topological measures and deficient topological measures.
Signed topological measures of finite norm on a compact space were introduced in  \cite{Grubb:Signed} 
then studied and used in various works, including \cite{Grubb:SignedqmDimTheory}, 
\cite{OrjanAlf:CostrPropQlf},  \cite{Svistula:Signed},  and \cite{Svistula:Convergence}. 
Deficient topological measures (as real-valued functions on a compact space) were first defined and used by 
A. Rustad and O. Johansen in \cite{OrjanAlf:CostrPropQlf} and later independently rediscovered and further developed by  M. Svistula in \cite{Svistula:Signed} 
and  \cite{Svistula:DTM}.    
In this paper we define and study signed deficient topological measures and signed topological measures on locally compact spaces. 
These set functions may assume $\infty$ or $-\infty$. We prove that a signed deficient topological measure is $\tau$-smooth on open sets and 
$\tau$-smooth on compact sets. We show that the family of signed measures that are differences of two Radon measures 
is properly contained in the family of signed topological measures, which in turn is properly contained in the family of signed deficient topological measures. 
Extending known results for compact spaces, 
we prove that a signed topological measure is the difference of its positive and negative variations if at least one variation is finite.
We also show that the total variation is the sum of the positive and negative variations.
If the space is locally compact, connected, locally connected, and has the Alexandroff one-point compactification of genus 0,  
we prove that a signed topological measure of finite norm can be represented  as the difference of two topological measures. This representation is not unique.
 
In this paper $X$ will be a locally compact space. By 
$\ox$ we denote the collection of open subsets of   $X $;
by $\cx$  the collection of closed subsets of   $X $;
by $\kx$  the collection of compact subsets of   $X $. We also set
$ \ax = \cx \cup \ox$. 
We denote by $\cl E$ the closure of a set $E$, and  $ \bsc$ stands for a union of disjoint sets.
We say that a signed set function is real-valued if its range is $ \R$. 
When we consider set functions into extended real numbers they are not identically $ \infty$ or $- \infty$. 

\begin{definition} \label{MDe2}
Let $X$ be a  topological space and $\mu$ be a set function on $\E$, a family of subsets of $X$ that contains $\ox \cup \cx$. 
We say that 
\begin{itemize}
\item
$\mu$ is compact-finite if $ |\mu(K) | < \infty$ for any $ K \in \kx$.
\item
$\mu$ is $\tau-$ smooth on compact sets if 
$K_\alpha \searrow K, K_\alpha, K \in \kx$ implies $\mu(K_\alpha) \rightarrow \mu(K)$.
\item
$\mu$ is $\tau-$ smooth on open sets if 
$U_\alpha \nearrow U, U_\alpha, U \in \ox$ implies $\mu(U_\alpha) \rightarrow \mu(U)$.
\item
$\mu$ is simple if it only assumes  values $0$ and $1$.
\end{itemize}
\end{definition}

\noindent
We recall the following easy lemma which can be found, for example, in 
\cite{Halmos} (see Chapter X, par. 50, Theorem A).
\begin{lemma} \label{HalmEzLe}
If $ C \se U \cup V$, where $C$ is compact, $U, V$ are open, then there exist compact sets 
$K$ and $D$ such that $C = K \cup  D, \ K \se U, \ \ D \se V$.
\end{lemma}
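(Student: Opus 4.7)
The plan is to reduce the problem to a separation of two disjoint compact subsets of $C$. Define $A = C \setminus V$ and $B = C \setminus U$. Since $V$ and $U$ are open, $A$ and $B$ are closed subsets of the compact set $C$, hence compact. From $C \subseteq U \cup V$ one reads off $A \subseteq U$ and $B \subseteq V$, and $A \cap B = C \setminus (U \cup V) = \emptyset$.

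Next I would handle degenerate cases up front: if $A = \emptyset$ then $C \subseteq V$, and $K = \emptyset$, $D = C$ works; symmetrically if $B = \emptyset$. From here on assume both $A$ and $B$ are nonempty. Using the fact that in a locally compact Hausdorff space disjoint compact sets can be separated by disjoint open sets, pick open sets $G_A \supseteq A$ and $G_B \supseteq B$ with $G_A \cap G_B = \emptyset$.

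Then set
\[
K = C \setminus G_B, \qquad D = C \setminus G_A.
\]
Both are closed in $C$ and therefore compact. Because $G_B \supseteq B = C \setminus U$, we have $K = C \setminus G_B \subseteq C \cap U \subseteq U$, and similarly $D \subseteq V$. Finally,
\[
K \cup D \;=\; C \setminus (G_A \cap G_B) \;=\; C \setminus \emptyset \;=\; C,
\]
which is the required decomposition.

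The only real content is the separation step, and that is standard for Hausdorff spaces (which is the tacit convention here, matching Halmos's setting); the rest is bookkeeping with complements. So I do not anticipate a genuine obstacle, only the need to make sure the Hausdorff assumption is in force and that the empty cases are disposed of before invoking separation.
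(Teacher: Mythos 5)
Your argument is correct and is essentially the standard proof (indeed the one in Halmos, which the paper cites rather than reproving): separate the disjoint compact sets $C\setminus U$ and $C\setminus V$ by disjoint open sets and take relative complements in $C$. The only tacit ingredient is the Hausdorff assumption, which is the standing convention for ``locally compact'' in this paper, so there is no gap.
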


\noindent
Recall the following fact. (see, for example, \cite{Dugundji}, Chapter XI, 6.2):
\begin{lemma} \label{easyLeLC}
Let $K \subseteq U, \ K \in \bcx,  \ U \in \ox$ in a locally compact space $X$.
Then there exists a set  $V \in \ox$ with compact closure such that
$ K \se V \se \cl V \se U. $ 
\end{lemma}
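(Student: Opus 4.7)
The plan is to reduce the statement to a pointwise claim (for individual points of $K$) and then assemble a finite union using compactness of $K$.

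First I would establish the pointwise version: for each $x \in U$ there exists an open $V_x \ni x$ with $\cl V_x$ compact and $\cl V_x \se U$. By local compactness there is an open neighborhood $W$ of $x$ with $\cl W$ compact. Then $W \cap U$ is an open neighborhood of $x$ inside the compact Hausdorff subspace $\cl W$. Since compact Hausdorff spaces are regular, I can find an open set $V_x$ with $x \in V_x$ and the closure of $V_x$ (taken in $\cl W$, which coincides with the closure in $X$ because $V_x \se W$) contained in $W \cap U$. This closure is closed inside the compact set $\cl W$, hence compact, and lies in $U$.

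Second, the collection $\{V_x\}_{x \in K}$ is an open cover of the compact set $K$, so a finite subcover $V_{x_1},\dots,V_{x_n}$ exists. Setting $V = V_{x_1} \cup \dots \cup V_{x_n}$ gives an open set with $K \se V$ and $\cl V = \cl V_{x_1} \cup \dots \cup \cl V_{x_n}$, a finite union of compact subsets of $U$; thus $\cl V$ is a compact subset of $U$, as required.

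The main obstacle is the pointwise step. The naive attempt — take a neighborhood $W$ of $x$ with compact closure and set $V_x = W \cap U$ — fails because $\cl V_x$ need not remain inside $U$. One genuinely needs regularity (available since locally compact Hausdorff spaces are regular) to shrink $V_x$ so that its closure is captured inside $W \cap U$. After that, the rest is a routine compactness-plus-finite-union argument.
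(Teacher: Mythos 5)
Your argument is correct and is essentially the standard textbook proof (the paper itself gives no proof, citing Dugundji XI.6.2, which proceeds exactly this way): regularity inside a compact closed neighborhood gives the pointwise version, and compactness of $K$ plus the fact that the closure of a finite union is the union of the closures finishes it. The one point worth stating explicitly is that the set produced by regularity of $\cl W$ is open in $\cl W$ a priori, and becomes open in $X$ only because it is contained in the open set $W$; you address this in passing and the step is sound.
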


\begin{remark} \label{netsSETS}
Here is an observation which follows, for example, from Corollary 3.1.5 in \cite{Engelking}. 
\begin{itemize}
\item[(i)]
If $K_\alpha \searrow K, K \se U,$ where $U \in \ox,\  K, K_\alpha \in \cx$, and $K$ and at least one of $K_\alpha$ are compact, 
then there exists $\alpha_0$ such that
$ K_\alpha \se U$ for all $\alpha \ge \alpha_0$.
\item[(ii)]
If $U_\alpha \nearrow U, K \se U,$ where $K \in \kx, \ U, \ U_\alpha \in \ox$ then there exists $\alpha_0$ such that
$ K \se U_\alpha$ for all $\alpha \ge \alpha_0$.
\end{itemize}
\end{remark}

\begin{Definition}\label{DTM}
A  deficient topological measure on a locally compact space $X$ is a set function
$\nu:  \cx \cup \ox \longrightarrow [0, \infty]$ 
which is finitely additive on compact sets, inner compact regular, and 
outer regular, i.e. :
\begin{enumerate}[label=(DTM\arabic*),ref=(DTM\arabic*)]
\item \label{DTM1}
if $C \cap K = \O, \ C,K \in \kx$ then $\nu(C \sc K) = \nu(C) + \nu(K)$; 
\item \label {DTM2} 
$ \nu(U) = \sup\{ \nu(C) : \ C \se U, \ C \in \kx \} $
 for $U\in\ox$;
\item \label{DTM3} 
$ \nu(F) = \inf\{ \nu(U) : \ F \se U, \ U \in \ox \} $  for  $F \in \cx$.
\end{enumerate}
We denote by $\DTM(X)$ the collection of all deficient topological measures on $X$.
We say that a deficient topological measure $ \nu$ is finite if $ \nu(X) < \infty$.
\end{Definition} 

\noindent
For a closed set $F$, $ \nu(F) = \infty$ iff $ \nu(U) = \infty$ for every open set $U$ containing $F$.

\begin{Definition}\label{TMLC}
A topological measure on $X$ is a set function
$\mu:  \cx \cup \ox \to [0,\infty]$ satisfying the following conditions:
\begin{enumerate}[label=(TM\arabic*),ref=(TM\arabic*)]
\item \label{TM1} 
if $A,B, A \sc B \in \kx \cup \ox $ then
$
\mu(A\sqcup B)=\mu(A)+\mu(B);
$
\item \label{TM2}  
$
\mu(U)=\sup\{\mu(K):K \in \bcx, \  K \se U\}
$ for $U\in\ox$;
\item \label{TM3}
$
\mu(F)=\inf\{\mu(U):U \in \ox, \ F \se U\}
$ for  $F \in \cx$.
\end{enumerate}
We denote by $\TM(X)$ the collection of all topological measures on $X$.
\end{Definition} 

\noindent
The following Definition is from \cite{Butler:DTMLC}, Section 2.

\begin{definition} \label{laplu}
Given signed set function $\la: \kx  \longrightarrow [-\infty, \infty] $ which assumes at most one of $ \infty, -\infty$
we define two set functions on $\ox \cup \cx$, 
the positive variation $\laplu$ and the total variation $| \la|$,  
as follows:  \\
for an open subset $U \se X$ let 
\begin{eqnarray} 
\laplu(U) = \sup \{\la(K): \  K \se U,  K \in \kx \}; 
\end{eqnarray}
\begin{eqnarray} 
|\la| (U) = \sup \{ \sum_{i=1}^n |\la(K_i)| : \  \bsc_{i=1}^n K_i \se U, \ K_i \se \kx,  \, n \in \N \}; 
\end{eqnarray}
and for a closed subset $F \se X$ let 
\begin{eqnarray}
\laplu(F)  = \inf\{ \laplu (U) : \ F \se U, \ U \in \ox\}. 
\end{eqnarray}
\begin{eqnarray} 
|\la| (F)  = \inf\{ |\la| (U) : \ F \se U, \ U \in \ox\}. 
\end{eqnarray} 
We define the negative variation $\lamin$ associated with a signed set function $\la$  
as a  set function $\lamin = (- \la)^{+}$.  
\end{definition}

One may consult  \cite{Butler:DTMLC} for more properties of deficient topological measures on locally compact spaces, 
including monotonicity and superadditivity, 
as well as more information about $ \laplu, \lamin$ and $ | \la|$. 

\section{Signed deficient topological measures} \label{seSDTM}

\begin{Definition}\label{SDTM}
A signed deficient topological measure on a locally compact space $X$ is a set function
$\nu:  \cx \cup \ox \rightarrow [ - \infty, \infty ] $  that assumes at most one of $\infty, 
-\infty$ and that is finitely additive on compact sets, inner compact regular on open sets, and outer regular on closed sets, 
i.e. 
\begin{enumerate}[label=(SDTM\arabic*),ref=(SDTM\arabic*)]
\item \label{SDTM1}
If $C \cap K = \O, \ C,K \in \kx$ then $\nu(C \sc K) = \nu(C) + \nu(K);$ 
\item \label{SDTM2} 
$\mu(U)=\lim\{\mu(K):K \in \bcx, \  K \se U\}
$ for $U\in\ox$;
\item \label{SDTM3}
$\mu(F)=\lim\{\mu(U):U \in \ox, \ F \se U\}$ for  $F \in \cx$.
\end{enumerate}
A signed deficient topological measure is compact-finite if $| \nu(K) | <\infty$ for each $K \in \kx$.
By $SDTM(X)$ we denote the collection of all signed deficient topological measures on $X$. 
\end{Definition} 

\begin{remark} \label{netcond}

In condition \ref{SDTM2} we mean the limit of the net $\nu(C)$ with the index set $\{ C \in \kx: \ C \se U\}$ ordered
by inclusion. The limit exists and is equal to $\nu(U)$. Condition  \ref{SDTM3} is interpreted in a similar way, 
with the index set  being $\{ U \in \ox: \ U \supseteq C \}$ ordered by reverse inclusion.
\end{remark} 

\begin{remark} \label{byCompacts}
Since we consider set-functions that are not identically $ \infty $ or $ - \infty$, we see that for a signed deficient topological measure $ \nu (\O) = 0$.
If $\nu$ and $\mu$ are signed deficient topological measures that agree on $\kx$, then $\nu =\mu$;
if $\nu \le \mu$ on $\kx$, then $\nu  \le \mu$.
\end{remark}

\begin{remark} \label{DTMsdtm}
Any deficient topological measure is a signed deficient topological measure.
\end{remark}

\begin{lemma} \label{finAddOC}
Let $\mu : \kx \cup \ox \longrightarrow [- \infty, \infty]$  be a set function 
that assumes at most one of $\infty, 
-\infty$ and such that 
\begin{enumerate}[label=(a\arabic*),ref=(a\arabic*)]
\item
$\mu(U)=\lim\{\mu(K):K \in \bcx, \  K \se U\}
$ for $U\in\ox$;
\item \label{SDTM3a}
$\mu(K)=\lim\{\mu(U):U \in \ox, \ K \se U\}$ for  $K \in \kx$.
\end{enumerate}
Then $\mu$ is finitely additive on compact sets iff it is finitely additive on open sets. 
In particular, this holds for a signed deficient topological measure. 
\end{lemma}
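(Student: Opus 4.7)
The proof is a symmetric pair of cofinality arguments, bridging the inner/outer regularity conditions (a1)--(a2) with additivity on the opposite class of sets.

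($\Rightarrow$) Assume $\mu$ is finitely additive on compact sets, and let $U, V \in \ox$ be disjoint, so that $U \sqcup V \in \ox$. By Lemma \ref{HalmEzLe}, every compact $K \subseteq U \sqcup V$ splits as $K = K_1 \sqcup K_2$ with $K_i \in \kx$, $K_1 \subseteq U$, $K_2 \subseteq V$ (disjointness is forced by $U \cap V = \emptyset$), and compact additivity then gives $\mu(K) = \mu(K_1) + \mu(K_2)$. The map $(K_1,K_2) \mapsto K_1 \sqcup K_2$ from the product directed set $\{C \in \kx : C \subseteq U\} \times \{C \in \kx : C \subseteq V\}$ into $\{C \in \kx : C \subseteq U \sqcup V\}$ is monotone and surjective, so it realizes a cofinal subnet of the net defining $\mu(U \sqcup V)$ via (a1). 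Passing to the limit along this subnet and applying (a1) to each of $U$ and $V$ yields $\mu(U \sqcup V) = \mu(U) + \mu(V)$.

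($\Leftarrow$) Assume $\mu$ is finitely additive on open sets, and let $C, K \in \kx$ be disjoint. Since $X$ is locally compact Hausdorff, I first separate $C$ and $K$ by disjoint opens $U_0 \supseteq C$, $V_0 \supseteq K$. For any open $W \supseteq C \sqcup K$, the sets $U := W \cap U_0$ and $V := W \cap V_0$ are disjoint opens with $C \subseteq U \subseteq U_0$, $K \subseteq V \subseteq V_0$, and $U \sqcup V \subseteq W$, so the opens of the form $U \sqcup V$ with $C \subseteq U \subseteq U_0$, $K \subseteq V \subseteq V_0$ form a cofinal family inside the outer regularity net for $C \sqcup K$. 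Open additivity gives $\mu(U \sqcup V) = \mu(U) + \mu(V)$, and passing to the limit, using (a2) on each of $C$ and $K$ through their restricted nets (which are still cofinal in the full outer nets), produces $\mu(C \sqcup K) = \mu(C) + \mu(K)$.

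The main obstacle, and the point that requires the most care, is the passage from a limit on each factor to a limit of the sum, given that $\mu$ is extended-real valued. The standing hypothesis that $\mu$ assumes at most one of $+\infty, -\infty$ guarantees that every sum $\mu(\cdot) + \mu(\cdot)$ involved is unambiguous and restricts all values to a single half-line on which addition is jointly continuous; the identity $\lim (a_\alpha + b_\beta) = (\lim a_\alpha) + (\lim b_\beta)$ over a product directed set then follows from a short case split on whether each limit is finite or infinite. The final assertion of the lemma is then immediate: for a signed deficient topological measure, condition \ref{SDTM2} is exactly (a1), while \ref{SDTM3} restricted to compact sets (which are closed in the Hausdorff setting) gives (a2), so the lemma applies.
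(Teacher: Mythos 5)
Your proof is correct, but it takes a genuinely different route from the paper's. The paper argues by hand with $\eps$'s and contradictions: for the forward direction it first shows that if one of $\mu(U_1),\mu(U_2)$ is infinite then so is $\mu(U)$, then that if both are finite so is $\mu(U)$ (each time by choosing compact sets violating compact additivity), and finally runs a $4\eps$ estimate in the all-finite case; the converse direction is only asserted to be "similar." You instead package both directions as a single cofinal-subnet argument over a product directed set: the map $(K_1,K_2)\mapsto K_1\sqcup K_2$ (resp. $(U,V)\mapsto U\sqcup V$) is monotone with cofinal image in the net defining $\mu(U\sqcup V)$ (resp. $\mu(C\sqcup K)$), and the limit of the sum over the product directed set is the sum of the limits because all values lie in $[-\infty,\infty]$ with one endpoint removed, where addition is jointly continuous. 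This disposes of the finite and infinite cases uniformly, and — a genuine plus — you actually supply the reverse direction, via separating the disjoint compacts by disjoint open sets $U_0,V_0$ and intersecting with an arbitrary $W\supseteq C\sqcup K$ to get a cofinal family of split neighborhoods. Minor points to tighten: the separation step uses the Hausdorff hypothesis (implicit throughout the paper); "a single half-line" should read "$(-\infty,+\infty]$ or $[-\infty,+\infty)$"; and finite additivity for more than two sets follows from the two-set case by induction, since disjoint unions of opens (resp. compacts) remain open (resp. compact).
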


\begin{proof}
Without loss of generality, assume that $\mu$ does not assume $- \infty$. 
Suppose $ \mu$ is finitely additive on compact sets. 
Let $U_1 \sc U_2 = U$, where $U_1, U_2, U \in \ox$.

First, we shall show that if at least one of $\mu(U_1), \mu(U_2)$ is $ \infty$, then also $ \mu(U) = \infty$; in this case 
the finite additivity on open sets trivially holds.
So let $ \mu(U_1) = \infty$. Suppose to the contrary that $ \mu(U)  < \infty $. For $ \eps = 1$ let $ C \in \kx$ be such that
$ C \se U$ and $| \mu(U) - \mu(K) | < 1$ for any compact set $K$  satisfying $ C \se K \se U$.
Choose  $n \in \N$ such that $ n > | \mu(U) |$ if $ \mu(U_2) = \infty$, 
or $n$ such that $ n + \mu(U_2) - 1 > \mu(U) + 1$ if $ \mu(U_2) \in \r$.
Pick a compact $C_1 \se U_1$ such that $ \mu(C_1) > n$. Pick a compact $ C_2 \se U_2$ such that  
$ \mu(C_2) > n$ if $ \mu(U_2) = \infty$, and
$ | \mu(C_2)  - \mu(U_2) | < 1$ if $\mu(U_2) < \infty$.
We may assume by Lemma \ref{HalmEzLe} that  $ C \se  C_1 \sc C_2 \se U$, 
so $\mu(C_1 \sc C_2) \le \mu(U) + 1$. But also 
$\mu(C_1 \sc C_2) = \mu(C_1) + \mu(C_2)  > \mu(U) + 1$, whether $\mu(U_2) = \infty$ or not. 
The contradiction shows that we must have 
$\mu(U) = \infty$.

Now we shall show that if $\mu(U_1), \mu(U_2) < \infty$ then also $ \mu(U) < \infty$.
Suppose to the contrary that $ \mu(U) = \infty$. Pick a natural number $ n > \mu(U_1) + \mu(U_2) + 2$.
Choose compact $C \se U$ such that $\mu(K) > n$ whenever $ K \in \kx, C \se K \se U$.
Also for $ i=1,2$ pick compact sets $C_i \se U_i$ such  that $| \mu(U_i) - \mu(C_i) | < 1$. We may assume that $ C \se C_1 \sc C_2$, 
so $ \mu(C_1 \sc C_2) > n$. But also $ \mu(C_1 \sc \C_2) = \mu(C_1) + \mu(C_2)  \le \mu(U_1 ) + \mu(U_2) + 2 < n$. 
Thus, we must have $ \mu(U) <  \infty$.

We are left to show that $ \mu(U_1) + \mu(U_2) = \mu(U)$  when $ \mu(U_1), \mu(U_2), \mu(U) < \infty$. 
Given $ \eps >0$, we may choose compact sets $K_1, K_2, K$ such that $K \se U, K_i \se U_i,\  K = K_1 \sc K_2$ and 
$ | \mu(U) - \mu(K) | < \eps, | \mu(U_i) - \mu(K_i) | < \eps$ for $i=1,2$. 
Then 
\begin{align*}
\mu(U_1)  + \mu(U_2) &\le \mu(K_1) + \mu(K_2) +  2\eps = \mu(K) + 2 \eps \le \mu(U) + 3 \eps \\
& \le \mu(K) +4 \eps = \mu(K_1 )  + \mu(K_2)  +4 \eps \le \mu(U_1) + \mu(U_2) + 4 \eps.
\end{align*}
Finite additivity on open sets follows.

The fact that finite additivity on open sets implies finite additivity on compact sets can be proved in a similar way.
\end{proof}  

\begin{definition} \label{SDTMnorDe}
For a signed  deficient topological measure $\nu$ we define  $\norm \nu \norm = \sup \{ | \nu(K)|  :  K \in  \kx \} $.
We denote by $\SDTM(X)$ the collection of all signed deficient topological measures on $X$ for which 
 $\norm \nu \norm < \infty$.
\end{definition}

\begin{remark} \label{SDTMcone}
Note that   $\norm \nu \norm =  \sup \{ | \nu(A)|  :  A \in \ox \cup \kx \} = \sup \{ | \nu(U)|  :  U \in \ox \}$.  
The collection of all real-valued signed deficient topological measures is a linear space. 
Any $\nu \in \SDTM(X) $ is real-valued, and $\norm \nu \norm$ is a norm on a linear space $\SDTM(X)$.
\end{remark} 

Studying a signed  deficient topological  measure it is beneficial to consider  its positive, negative and 
total variation, defined in Definition \ref{laplu}. 

\begin{remark} \label{lapldtm1}
Let $\nu$ be a  signed deficient topological measure on a locally compact space $X$. 
By Proposition 21 in Section 3 of \cite{Butler:DTMLC}
the set functions $\nuplu, \numin, |\nu|$ defined in Definition \ref{laplu} 
are deficient topological measures, and 
$ |\nu| \le \nuplu + \numin$.  Also,
$\nuplu $  is the unique 
smallest deficient topological measure such that $ \nuplu \ge \nu$ 
and $\numin $  is the unique largest deficient topological measure 
such that $ - \numin  \le \nu $.
Note that $ \norm \nu \norm < \infty$ if and only if $\nuplu$ and $ \numin$ are finite, i.e. $ \nuplu(X), \numin(X) < \infty$.
To define $\nuplu, \numin$ and $| \nu|$ on $ \ox$  one may use instead of compact sets open sets or sets from $\ox \cup \kx$. 
\end{remark} 

\begin{remark} \label{rmk} 
Let $X$  be locally compact and let $\nu$ be a signed deficient topological measure on $X$. 
From  Lemma 10 in Section 2 of \cite{Butler:DTMLC} we have: (a)  $|\nu(A) | \le |\nu| (A)$ for any $A \in \ox \cup \cx$;
(b) superadditivity:  if $ \bsc_{t \in T} A_t \subseteq A, $  where $A_t, A \in \ox \cup \cx$,  
and at most one of the closed sets is not compact,
then $ \sum_{t \in T } |\nu(A_t)| \le  \sum_{t \in T } |\nu| (A_t) \le | \nu| (A).$ 
\end{remark}
 
\begin{lemma}  \label{SDTMsmall}
Let $X$ be locally compact.The following holds for a signed deficient topological measure $ \nu$:
\begin{enumerate}[label=(d\arabic*),ref=(d\arabic*)]
\item \label{d1}
Given $U \in \ox$ with $ | \nu| (U) < \infty$  and $\eps >0$, there exists $C \se U, \ C \in \kx$ such that 
$ |\nu(A)| \le | \nu | (A) < \eps$  for any compact or open $ A \se U \sm C$. 
\item \label{d2}
Given $C \in \cx$ with $ | \nu| (C) < \infty$ and $\eps >0$, there exists $U \in \ox, \ C \se U$ such that 
$ | \nu(A) | \le | \nu | (A) < \eps$  for any compact or open $ A \se U \sm C$. 
\item \label{d3}
Given $U \in \ox$ with $ | \nu| (U) < \infty$  and $\eps >0$, there exists $C \se U, \ C \in \kx$ such that for any sets 
$A, B \in \ox \cup \kx, \ C \se A \se B \se U$ we have $ | \nu(A) - \nu(B)|  < \eps$. 
\end{enumerate}
\end{lemma}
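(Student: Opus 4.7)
The plan is to leverage the fact that $|\nu|$ is a deficient topological measure (Remark~\ref{lapldtm1}) satisfying inner compact regularity, outer regularity, and superadditivity (Remark~\ref{rmk}(b)), together with the pointwise bound $|\nu(A)|\le|\nu|(A)$ from Remark~\ref{rmk}(a). Parts (d1) and (d2) then reduce immediately to the analogous smallness assertions phrased purely in terms of the DTM $|\nu|$, so the real work lies in part (d3), where $\nu(A)$ itself must be controlled uniformly in $A$.

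For (d1), I would apply inner compact regularity of $|\nu|$ on $U$ to pick $C\se U$ compact with $|\nu|(C)>|\nu|(U)-\eps$. For any compact $K\se U\sm C$, superadditivity gives $|\nu|(K)+|\nu|(C)\le|\nu|(U)$, so $|\nu|(K)\le|\nu|(U)-|\nu|(C)<\eps$, and taking the supremum over such $K$ bounds $|\nu|(V)$ by the same quantity whenever $V$ is open in $U\sm C$. For (d2) I would dualize: outer regularity of $|\nu|$ on $C$ (legitimate since $|\nu|(C)<\infty$) gives an open $U\supseteq C$ with $|\nu|(U)<|\nu|(C)+\eps$; because Remark~\ref{rmk}(b) allows at most one of the disjoint closed sets to be non-compact, the same superadditivity estimate still applies when $K$ is compact and $C$ is merely closed, and the open case follows by inner approximation exactly as before.

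For (d3), the inequality $|\nu(U)|\le|\nu|(U)<\infty$ lets me apply \ref{SDTM2} to $U$ and pick a compact $C\se U$ with $|\nu(K)-\nu(U)|<\eps/4$ for every compact $K$ satisfying $C\se K\se U$. I claim this $C$ works. Given $A\in\ox\cup\kx$ with $C\se A\se U$, the goal is $|\nu(A)-\nu(U)|<\eps/2$. When $A$ is compact this is immediate. When $A$ is open, $|\nu(A)|\le|\nu|(A)\le|\nu|(U)<\infty$, so \ref{SDTM2} applied to $A$ supplies a compact $C_A\se A$ with $|\nu(K)-\nu(A)|<\eps/4$ for all compact $K$ satisfying $C_A\se K\se A$. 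The key move is to take $K:=C\cup C_A$, which is compact, contained in $A\se U$, and simultaneously contains both $C$ and $C_A$; the two estimates combine to give $|\nu(A)-\nu(U)|<\eps/2$. Running the same argument with $B$ in place of $A$ and invoking the triangle inequality yields $|\nu(A)-\nu(B)|<\eps$.

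The main obstacle is precisely this last bridge. Condition \ref{SDTM2} is only a directed-net limit: it says compact approximants to a fixed open set $U$ eventually $\nu$-approximate $\nu(U)$, but an open intermediate $A$ is not approximated by the particular compact sets used for $U$, and a priori there is no reason for the $\nu$-limits along the two different directed systems to agree. The device of replacing an arbitrary compact exhauster of $A$ by $C\cup C_A$ is what forces a single compact set to be cofinal in both filtrations at once and so ties $\nu(A)$ back to $\nu(U)$.
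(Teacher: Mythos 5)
Your proof is correct and follows essentially the same route as the paper: parts (d1)--(d2) via regularity and superadditivity of the deficient topological measure $|\nu|$, and part (d3) via the net-limit form of \ref{SDTM2}, using the union $C\cup C_A$ to produce a single compact set cofinal in the directed systems for both $A$ and $U$ (the paper does exactly this, anchoring the open--open case through intermediate compacts containing $C$). No gaps.
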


\begin{proof}
Let $ \eps>0$.   In part \ref{d1} given $U \in \ox$ choose $C \in \kx, C \se U$ and 
in part \ref{d2} given $ C \in \cx$ choose $ U \se \ox, C \se U$ such that $|\nu|(U) - |\nu|(C)  < \eps$.
Then by monotonicity and superadditivity of $ |\nu|$, we have
$$ |\nu(A)| \le | \nu| (A)  \le | \nu| ( U \sm  C)  \le  | \nu| (U) -  | \nu| (C) < \eps.$$

Now we shall show part \ref{d3}. Since $ | \nu| (U) < \infty$, we have $| \nu| (A) < \infty$ so $\nu(A) \in \r$ 
for any $A \se U, A \in \kx \cup \ox$. For $ \eps >0$ we may find 
$ C \in \kx, C \se U$ such that $| \nu(K) - \nu(U) | < \eps/4$ whenever  $K \in \kx, \, C \se K \se U$.
If $A, B \in \kx$ then $ |\mu(A) - \nu(B) | \le | \nu(A) - \nu(U)| + | \nu(U) - \nu(B) | < \eps/2 < \eps$. 
If $A, B \in \ox$ then find compact sets $K, D$ such that $ C \se K \se A,  C \se D \se B$ and 
$ | \nu(A) - \nu(K)| < \eps/4,  | \nu(B) - \nu(D)| < \eps/4$. Then
$ | \nu(A) - \nu(B)| \le | \nu(A) - \nu(K)| + | \nu(K) - \nu(D)| + | \nu(D) - \nu(B) | < 3 \eps/4 < \eps$. 
The remaining two cases can be proved similarly. 
\end{proof}

\noindent
Lemma  \ref{SDTMsmall} helps us to prove the following result.

\begin{lemma} \label{tildenu}
Let $X$ be locally compact. Suppose $\nu$ is a signed deficient topological measure on $X$.
For each open set $U$ define $\widehat \nu(U) =  \sup \{ | \nu(C)| : \ \ C \se U, \ C \in \kx \} $. Then 
$\widehat \nu(U) =  \sup \{ | \nu(V)| : \ \ V \se U, \ V \in \ox \} =  \sup \{ | \nu(A)| : \ \ A \se U, \ A \in \ox \cup \kx \}$, and
$\widehat \nu(U) \le | \nu | (U) \le 2 \widehat \nu(U)$.
\end{lemma}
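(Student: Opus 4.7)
The plan is to first establish that the three suprema describing $\widehat\nu(U)$ coincide, and then sandwich $|\nu|(U)$ between $\widehat\nu(U)$ and $2\widehat\nu(U)$. Write $\alpha, \beta, \gamma$ for the suprema of $|\nu(A)|$ taken over $A \se U$ with $A \in \kx$, $A \in \ox$, and $A \in \ox \cup \kx$, respectively. Since $\kx, \ox \se \ox \cup \kx$ we automatically have $\max(\alpha,\beta) \le \gamma$, and the reverse inequality is trivial, so it suffices to prove $\alpha = \beta$.

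For $\beta \le \alpha$, fix an open $V \se U$ and $\eps > 0$; condition \ref{SDTM2} produces a compact $C \se V \se U$ with $|\nu(V) - \nu(C)| < \eps$, giving $|\nu(V)| \le \alpha + \eps$. For $\alpha \le \beta$, fix a compact $C \se U$ and $\eps > 0$, and use Lemma \ref{easyLeLC} to choose an open $W$ with $C \se W \se \cl W \se U$ and $\cl W$ compact. Condition \ref{SDTM3} gives $\nu(C) = \lim\{\nu(V') : V' \supseteq C,\ V' \in \ox\}$, and the subfamily of those $V'$ additionally satisfying $V' \se W$ is cofinal under reverse inclusion (given any open $V' \supseteq C$, replace it by $V' \cap W$). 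Hence we may pick an open $V$ with $C \se V \se W \se U$ and $|\nu(V) - \nu(C)| < \eps$, which yields $|\nu(C)| \le \beta + \eps$.

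For the sandwich, $\widehat\nu(U) \le |\nu|(U)$ is immediate from Remark \ref{rmk}(a) and monotonicity of $|\nu|$, since $|\nu(C)| \le |\nu|(C) \le |\nu|(U)$ for every compact $C \se U$. For the reverse bound, take any pairwise disjoint compact sets $K_1,\dots,K_n \se U$ appearing in the definition of $|\nu|(U)$, split the indices by $I^+ = \{i : \nu(K_i) \ge 0\}$ and $I^- = \{i : \nu(K_i) < 0\}$, and set $K^+ = \bigcup_{i \in I^+} K_i$, $K^- = \bigcup_{i \in I^-} K_i$. Both $K^\pm$ are compact subsets of $U$ and are disjoint, so iterating \ref{SDTM1} gives
$$\sum_{i=1}^n |\nu(K_i)| \;=\; \nu(K^+) - \nu(K^-) \;=\; |\nu(K^+)| + |\nu(K^-)| \;\le\; 2\,\widehat\nu(U),$$
and taking the supremum over such families yields $|\nu|(U) \le 2\widehat\nu(U)$.

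The main subtlety is the cofinality observation used to prove $\alpha \le \beta$: outer regularity \ref{SDTM3} at $C$ produces open approximants containing $C$ but not a priori contained in $U$, and Lemma \ref{easyLeLC} is needed to trap them inside $U$ without disturbing the limit. The positive/negative split in the second half is standard, and the factor of $2$ is unavoidable because $|\nu|$ weights each disjoint piece by the absolute value while $\widehat\nu$ only controls a single such piece at a time.
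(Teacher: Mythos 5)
Your proposal is correct and follows essentially the same route as the paper: the identity of the three suprema via inner/outer regularity, the lower bound from Remark \ref{rmk}, and the factor-of-two bound via the split into $K^+$ and $K^-$ with finite additivity on compact sets are exactly the paper's argument (the paper merely states the first identity as immediate from the definition, whereas you supply the cofinality detail, and your $\eps$-phrasing tacitly assumes $\nu(C)$, $\nu(V)$ finite, though the infinite cases follow from the same limit argument).
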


\begin{proof}
From the definition of a signed deficient topological measure 
we see that $\widehat \nu(U) = \sup \{ | \nu(V)| : \ \ V \se U, \ V \in \ox \} = \sup \{ | \nu(A)| : \ \ A \se U, \ A \in \ox \cup \kx \}$.
From Remark \ref{rmk}, $\widehat \nu(U) \le | \nu | (U)$. 
For a finite disjoint collection $\{ K_i :  K_i \in \kx, \  i \in I \}$ let 
$I^+ = \{ i \in I:  \nu(K_i) \ge 0 \}, \ \ \ I^- = \{ i \in I:  \nu(K_i) < 0 \}, \ \ \ K^+ = \bsc_{i \in I^+} K_i$, and 
 $K^- = \bsc_{i \in I^-} K_i$. If   $ K_i  \se U$ for $  i \in I $ then 
$\sum_{i \in I} |\nu(K_i)|  = \sum_{i \in I^+} \nu(K_i) + \sum_{i \in I^-} -\nu(K_i) = \nu(K^+) - \nu(K^-) 
= |\nu(K^+)| + |\nu(K^-) | \le 2 \widehat \nu(U). $
Thus,  $ | \nu | (U) \le 2 \widehat \nu(U)$.
\end{proof}  

\begin{theorem} \label{SDTMban}
The space $\SDTM(X)$ is a normed linear space under either of the two equivalent norms:
$\norm \nu \norm_1 = \sup \{ | \nu(K)|: \ \ \ K \in \kx \}, \ \ \ \norm \nu \norm_2 = \sup \{ | \nu| (K): \ \ \ K \in \kx \} = |\nu|(X)$.
\end{theorem}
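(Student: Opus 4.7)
My plan is to break the proof into three steps. First, show $\SDTM(X)$ is closed under the vector space operations. Second, identify $\|\nu\|_2$ with $|\nu|(X)$ and verify the norm axioms for $\|\cdot\|_2$. Third, deduce the equivalence from Lemma \ref{tildenu} applied to $U=X$.

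For linearity, scaling by $c\in\R$ trivially preserves \ref{SDTM1}--\ref{SDTM3} and gives $\|c\nu\|_1 = |c|\|\nu\|_1$. For a sum $\nu + \mu$ of elements of $\SDTM(X)$, both summands are real-valued by Remark \ref{SDTMcone}, so the pointwise sum is unambiguous; finite additivity on compact sets is immediate, while the regularity conditions \ref{SDTM2}--\ref{SDTM3} follow because, on the common directed index set described in Remark \ref{netcond}, both defining nets converge, and hence so does their sum. The bound $\|\nu+\mu\|_1 \le \|\nu\|_1 + \|\mu\|_1 < \infty$ is then automatic, so $\nu+\mu \in \SDTM(X)$.

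That $\|\nu\|_2 = |\nu|(X)$ is immediate from Remark \ref{lapldtm1}, which guarantees that $|\nu|$ is a (positive) deficient topological measure, whence inner compact regularity applied to $U=X$ yields $|\nu|(X) = \sup\{|\nu|(K):K\in\kx\}$. The three norm axioms for $\|\cdot\|_2$ then reduce to: positive definiteness via $|\nu(K)| \le |\nu|(K)$ from Remark \ref{rmk}, combined with Remark \ref{byCompacts}; homogeneity $|c\nu| = |c|\cdot|\nu|$, which drops out immediately from Definition \ref{laplu}; and subadditivity, which I prove by showing $|\nu+\mu|(U) \le |\nu|(U)+|\mu|(U)$ for every open $U$ via the term-by-term estimate $|(\nu+\mu)(K_i)| \le |\nu(K_i)| + |\mu(K_i)|$ inserted into the defining supremum of Definition \ref{laplu}, then specializing to $U=X$.

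Finally, the equivalence is a direct application of Lemma \ref{tildenu} at $U=X$: by inspection $\widehat\nu(X) = \|\nu\|_1$, and by Step~2 $|\nu|(X) = \|\nu\|_2$, so the bounds $\widehat\nu(X) \le |\nu|(X) \le 2\widehat\nu(X)$ read $\|\nu\|_1 \le \|\nu\|_2 \le 2\|\nu\|_1$. I do not foresee a real obstacle; the only care needed is the passage from nets to suprema when verifying that $\nu+\mu$ inherits \ref{SDTM2}--\ref{SDTM3}, and this is exactly what Remark \ref{netcond} licenses.
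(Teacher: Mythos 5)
Your proposal is correct and follows essentially the same route as the paper: the paper's proof simply notes that $\sup\{|\nu|(K): K\in\kx\}=|\nu|(X)$ is a norm (leaving the verifications you spell out to the reader, with the linear-space structure already recorded in Remark \ref{SDTMcone}) and then invokes Lemma \ref{tildenu} to get $\norm \nu \norm_1 \le \norm \nu \norm_2 \le 2\norm \nu \norm_1$. Your filled-in details (closure under sums via convergence of the defining nets, subadditivity of $|\cdot|$ termwise, and the specialization of Lemma \ref{tildenu} to $U=X$) are all sound.
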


\begin{proof}
It is easy to see that $\sup \{ | \nu| (K): \ \ \ K \in \kx \} = |\nu|(X)$, and that it is a norm.
From Lemma \ref{tildenu} we see that $\norm \nu \norm_1 \le \norm \nu \norm_2 \le  2\norm \nu \norm_1$, 
so these two norms are equivalent. 
\end{proof}

\begin{theorem} \label{STMsmoo}
Suppose $ \nu$ is a signed deficient topological measure such that at most one of $ \nuplu(X), \numin(X) $ is infinity, or 
$ \nu$ is real-valued.
Then $ \nu$ is $ \tau-$ smooth on open sets, and also $ \tau-$ smooth on compact sets. 
\end{theorem}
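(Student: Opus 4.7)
The plan is to prove $\tau$-smoothness on open sets and on compact sets separately, directly from the net-convergence interpretations of \ref{SDTM2} and \ref{SDTM3} (as spelled out in Remark \ref{netcond}) together with the cofinality observations in Remark \ref{netsSETS}.

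For the open-set case, I would take $U_\alpha \nearrow U$ and split according to the value of $\nu(U)$. In the finite subcase $\nu(U) \in \R$, fix $\varepsilon > 0$ and use \ref{SDTM2} on $U$ to obtain a compact $L \subseteq U$ with $|\nu(K) - \nu(U)| < \varepsilon/2$ for every compact $L \subseteq K \subseteq U$; by Remark \ref{netsSETS}(ii) there is $\alpha_0$ with $L \subseteq U_\alpha \subseteq U$ for all $\alpha \geq \alpha_0$. Apply \ref{SDTM2} again on each such $U_\alpha$ to get, for any $\delta > 0$, a compact $L'_\alpha \subseteq U_\alpha$ with $|\nu(K') - \nu(U_\alpha)| < \delta$ whenever $L'_\alpha \subseteq K' \subseteq U_\alpha$. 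The key step is to evaluate both estimates at $K' = L \cup L'_\alpha$, which is compact, contains both $L$ and $L'_\alpha$, and lies inside $U_\alpha \subseteq U$; the triangle inequality then gives $|\nu(U_\alpha) - \nu(U)| \leq \varepsilon/2 + \delta$, and sending $\delta \searrow 0$ completes the finite subcase. Before invoking the triangle inequality one must verify $\nu(U_\alpha) \in \R$: if $\nu(U_\alpha)$ were $\pm\infty$, the inner-regularity net on $U_\alpha$ would force $\nu(L \cup L'_\alpha)$ to diverge as $L'_\alpha$ varies, contradicting the uniform bound $|\nu(L \cup L'_\alpha) - \nu(U)| < \varepsilon/2$ that the choice of $L$ imposes on every compact set of the form $L \cup L'_\alpha \subseteq U$.

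In the infinite subcase, WLOG $\nu(U) = +\infty$ (the $-\infty$ case being symmetric and precluded by $\nu(U) = +\infty$ because $\nu$ takes at most one of $\pm\infty$). Fix $M > 0$ and use \ref{SDTM2} to produce a compact $L \subseteq U$ with $\nu(K) > M$ for all compact $L \subseteq K \subseteq U$. For $\alpha \geq \alpha_0$, the subnet $\{K' : L \subseteq K' \subseteq U_\alpha\}$ is cofinal among compact subsets of $U_\alpha$, witnessed by $K'' \cup L \supseteq K''$ for every $K'' \subseteq U_\alpha$, and on it $\nu(K') > M$; passing to the limit along this cofinal subnet yields $\nu(U_\alpha) \geq M$, and letting $M \to \infty$ forces $\nu(U_\alpha) = +\infty$.

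The $\tau$-smoothness on compact sets is formally dual: given $K_\alpha \searrow K$, one applies \ref{SDTM3} in place of \ref{SDTM2}, invokes Remark \ref{netsSETS}(i) to force $K_\alpha \subseteq V$ eventually for a chosen open $V \supseteq K$, and replaces the union $L \cup L'_\alpha$ by the intersection $V \cap V'_\alpha$ of open neighborhoods of $K_\alpha$; the finite/infinite dichotomy and cofinal-subnet argument proceed verbatim. The hypothesis that at most one of $\nu^\pm(X)$ is infinite, or that $\nu$ is real-valued, is what keeps the cofinal-subnet bookkeeping unambiguous by ensuring $\nu$ never simultaneously attains $+\infty$ and $-\infty$. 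The main technical obstacle throughout is exactly this bookkeeping in the $\pm\infty$ cases, where the naive triangle inequality is unavailable; once the cofinality argument above is set up, the remaining estimates are routine.
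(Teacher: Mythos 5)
Your argument is correct and follows essentially the same route as the paper's: split on whether $\nu(U)$ is finite or infinite, extract a compact $L\subseteq U$ that stabilizes the inner-regularity net, push it into $U_\alpha$ via Remark \ref{netsSETS}, and compare through the compact sets $L\cup L'_\alpha$ (dually, $V\cap V'_\alpha$ for the compact-set case). Two points should be fixed in the write-up. First, in the infinite subcase the conclusion is that the net $\nu(U_\alpha)$ converges to $+\infty$ --- for every $M$ one has $\nu(U_\alpha)\ge M$ eventually, the threshold $\alpha_0$ depending on $M$ through $L$ --- not that each $\nu(U_\alpha)$ equals $+\infty$; the latter is false already for Lebesgue measure on $\R$ with $U_n=(-n,n)\nearrow\R$. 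What you actually establish is exactly the required convergence, so this is only a misstatement of the conclusion. Second, your account of where the hypothesis enters is off: the property that $\nu$ assumes at most one of $\pm\infty$ is already part of Definition \ref{SDTM} and is not what the extra hypothesis supplies. The paper uses $\nu^{-}(X)\le M<\infty$ (after normalization) to rule out $\nu(U_s)=-\infty$ in the finite subcase and to upgrade $|\nu(U_s)|>n$ to $\nu(U_s)>n$ in the infinite subcase; your cofinal-subnet arguments accomplish both directly, which is a legitimate and arguably cleaner alternative, but you should then either say honestly where the hypothesis is used or observe that your argument does not appear to need it.
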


\begin{proof}
Suppose $ \nu$ is a signed deficient topological measure such that at most one of $ \nuplu(X), \numin(X) $ is infinity.
(The case where $\nu$ is real-valued is similar but simpler.)
Without loss of generality, let $ \numin(X) \le M$, where $ M \in \N$. 
First we shall show that $ \nu$ is $\tau$-smooth on open sets. 
Supose $U_s \nearrow U, U_s, U \in \ox, s \in S$. 
\begin{enumerate}[label=(\roman*),ref=(\roman*)]
\item \label{smii}
Assume first that $ \nu(U) < \infty$.
Let $ \eps >0$. There exists $K \in \kx, K \se U$ such that $| \nu(D) - \nu(U) | < \eps$  whenever  $D \in \kx, K \se D \se U$. 
By Remark \ref{netsSETS} let $t \in S$ be such that $ K \se U_s$ for any $s \ge t$.
We claim that $\nu(U_s) \in \r$ for each $s \ge t$. (Indeed,  let $s \ge t, \eps$ as above. 
By assumption, we can not have $\nu(U_s) = - \infty$.  Suppose 
that $ \nu(U_s) = \infty$. For $ n > | \nu(U)| + \eps$ pick compact $K_s \se U_s$ such that $\nu(C) > n$ for any $ C \in \kx$ satisfying
$ K_s \se C \se U_s$. Then for the compact $D= K \cup K_s \se U$ we have $ | \nu(D) - \nu(U)| < \eps$ and 
$\nu(D) > n  >  | \nu(U)| + \eps \ge \nu(U) + \eps$, which gives a contradiction).   

For each $s \ge t$, pick $C_s \se U_s, C_s \in \kx$ such that 
$ | \nu(U_s) - \nu(C) | < \eps$ for any compact $C$ satisfying $C_s \se C \se U_s$. Then
$$ | \nu(U) - \nu(U_s)| \le | \nu(U) - \nu(K \cup C_s) | +| \nu(K \cup C_s)) - \nu(U_s) |  < 2 \eps,$$
and $ \nu(U_s) \rightarrow \nu(U)$.
\item
Now assume that $ \nu(U) = \infty$. For $ n > M$  pick $K \in \kx$ such that $ K \se U$ and $\nu(D) > 2n$ 
whenever $D \in \kx,K \se D \se U$.
By Remark \ref{netsSETS} let $t \in S$ be such that $ K \se U_s$ for any $s \ge t$. 
Suppose $\nu(U_s) < \infty$. 
For $0 < \eps <n$,  there exists $D_s \in \kx, D_s \se U_s$ such that $| \nu(U_s) - \nu(D) | < \eps$ 
for any $ D \in \kx, D_s \se D \se U_s$. Then 
$$ | \nu(U_s) | \ge  | |\nu(U_s) - \nu(D_s \cup K)| - | -\nu(D_s \cup K) | | \ge 2n - \eps > n.$$
It follows that for any $ s \ge t$, whether $\nu(U_s) < \infty $ or $\nu(U_s) = \infty$, we have $ | \nu(U_s)| >n > M \ge \numin(X)$, 
so $\nu(U_s)$ can not be negative. Thus, for $ s \ge t$ we have  $  \nu(U_s) =| \nu(U_s)|  > n$, so $ \nu(U_s) \rightarrow \infty$.
\end{enumerate}
Thus, $ \nu$ is  $\tau$-smooth on open sets. We may show that $ \nu$ is $\tau$-smooth on compact sets in a similar fashion. 
\end{proof}

\section{Signed topological measures on a locally compact space}

\begin{definition} \label{STMLC}
A signed topological measure on a locally compact space $X$ is a set function
$\mu: \ox \cup \cx \rightarrow [-\infty, \infty]$  that assumes at most one of $\infty, 
-\infty$ and satisfies the following conditions:
\begin{enumerate}[label=(STM\arabic*),ref=(STM\arabic*)]
\item \label{STM1} 
if $A,B, A \sc B \in \kx \cup \ox $ then
$\mu(A\sqcup B)=\mu(A)+\mu(B);$
\item \label{STM2}  
$\mu(U)=\lim\{\mu(K):K \in \bcx, \  K \se U\}
$ for $U\in\ox$;
\item \label{STM3}
$\mu(F)=\lim\{\mu(U):U \in \ox, \ F \se U\}$ for  $F \in \cx$.
\end{enumerate}
By $STM(X)$ we denote the collection of all signed topological measures on $X$. 
\end{definition} 

\begin{lemma} \label{EqSTMC}
Suppose $ \mu: \ox \cup \cx \rightarrow [ - \infty, \infty ] $ is a set function that assumes at most one of $\infty, 
-\infty$ and that satisfies the following conditions:
\begin{enumerate}[label=(\roman*),ref=(\roman*)]
\item \label{STM2e} 
$ \mu(U) = \lim\{ \nu(K) : \ K \se U, \ K \in \kx \} $ for  $ U \in \ox$;
\item \label{STM3e} 
$ \mu(F) = \lim\{ \mu(U) : \ F \se U, \ U \in \ox \} $ for $F \in \cx$; 
\item \label{STM1e}
$\mu$ is finitely additive on $\ox$ or $ \nu$ is finitely additive on $ \kx$;
\item \label{STM1ed}
if $K \sc V = W, \ K \in \kx, \ V, W \in \ox$ then $ \mu(K)  + \mu(V) = \mu(W)$;
\item \label{dopYs}
for each $K \in \kx$ there exists  an open neighborhood $W$ of $K$ such that $ \mu(W \sm K) \in \r$. 
\end{enumerate}
Then $ \mu$ is a signed topological measure on $X$. 
In particular, any real-valued set function $ \mu$ on  $  \ox \cup \cx$ that satisfies \ref{STM2e} - \ref{STM1ed} is a
real-valued signed topological measure. 
\end{lemma}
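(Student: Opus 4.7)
The conditions \ref{STM2} and \ref{STM3} in the definition of a signed topological measure are verbatim the hypotheses (i), (ii), and the requirement that $\mu$ assume at most one of $\pm\infty$ is already built into the statement; so the only thing to verify is the full finite-additivity axiom \ref{STM1}: $\mu(A\sqcup B)=\mu(A)+\mu(B)$ whenever $A$, $B$, and $A\sqcup B$ all belong to $\kx\cup\ox$. My plan is to split into cases by which of the three sets are compact and which are open. A preliminary reduction comes from Lemma \ref{finAddOC}: since $\kx\subseteq\cx$, hypothesis (ii) restricted to compact sets gives condition (a2) of that lemma, while (i) is exactly (a1). Consequently hypothesis (iii) upgrades itself to simultaneous finite additivity on $\kx$ and on $\ox$, which settles the two homogeneous cases where all three sets are compact or all three are open.

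Three of the remaining mixed cases are also immediate. If $K_1,K_2\in\kx$ are disjoint with union $U\in\ox$, then $K_1 = U\setminus K_2$ is open (since $K_2$ is closed), so each $K_i$ is simultaneously compact and open, and additivity on $\kx$ applies. The symmetric situation $U_1\sqcup U_2 = C\in\kx$ works analogously: each $U_i = C\setminus U_{3-i}$ is closed in $X$ and a subset of the compact $C$, hence compact, and additivity on $\kx$ again suffices. The mixed case $K\sqcup V=W$ with $K\in\kx$ and $V,W\in\ox$ is precisely the content of hypothesis (iv).

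The one substantive case is $K\sqcup U = C$ with $K\in\kx$, $U\in\ox$, and $C\in\kx$; here $U$ need not be closed, so none of the preceding tricks apply. My plan is to invoke (v) on the compact set $C$ to produce an open neighborhood $W\supseteq C$ with $\mu(W\setminus C)\in\R$. Two decompositions of $W$ as compact $\sqcup$ open are now available, $W = K\sqcup(W\setminus K) = C\sqcup(W\setminus C)$, so (iv) gives
\[
\mu(K)+\mu(W\setminus K)=\mu(W)=\mu(C)+\mu(W\setminus C).
\]
Moreover $W\setminus K = U\sqcup(W\setminus C)$ as a disjoint union of open sets (the inclusion $\supseteq$ is obvious, and for $\subseteq$ a point of $W\setminus K$ is either in $C\setminus K=U$ or in $W\setminus C$), so the already-proved open additivity yields $\mu(W\setminus K)=\mu(U)+\mu(W\setminus C)$. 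Substituting and cancelling the finite term $\mu(W\setminus C)$ from both sides gives the desired $\mu(C) = \mu(K)+\mu(U)$.

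The main delicate point will be justifying the cancellation when some of $\mu(K),\mu(U),\mu(C)$ are infinite: because $\mu$ takes at most one of $\pm\infty$ and $\mu(W\setminus C)\in\R$, every sum appearing in the chain is unambiguously defined in $[-\infty,\infty]$, and subtracting the finite real $\mu(W\setminus C)$ preserves the equation regardless of the other terms. This is precisely why hypothesis (v) is essential: it furnishes a finitely-valued ``buffer'' open complement without which the algebraic extraction of finite additivity would be ill-defined. For the ``in particular'' clause at the end of the statement, real-valuedness of $\mu$ makes (v) automatic (take any open $W\supseteq K$), and the entire case analysis goes through unchanged.
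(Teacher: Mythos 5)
Your proposal is correct and follows essentially the same route as the paper's proof: reduce via Lemma \ref{finAddOC} to the single nontrivial case $K \sqcup V = C$ with $K, C$ compact and $V$ open, then use hypothesis (v) to produce an open $W \supseteq C$ with $\mu(W \setminus C) \in \R$, apply hypothesis (iv) together with finite additivity on open sets to the two decompositions of $W$, and cancel the finite buffer term. You are merely more explicit than the paper about the easy mixed cases and about why the cancellation is legitimate when infinite values occur.
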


\begin{proof}
We need to check the condition \ref{STM1} of Definition \ref{STMLC}.
By Lemma \ref{finAddOC} $\mu$ is finitely additive on $ \ox$ and on $ \kx$, so we only need to show that 
if $K \sc V = C$ where $ K , C \in \kx,  \, V \in \ox$ then $ \mu(K) + \mu(V) = \mu(C)$.
Let $W \in \ox$ be such that $ C \se W$ and $ \mu(W \sm C) \in \r$. 
Then $K \sc V  = C \se W$, so $K \sc V \sc (W \sm C) = W = C \sc (W \sm C)$. 
By part \ref{STM1ed} and finite additivity of $ \mu$ on open sets 
$$ \mu(K) + \mu(V) + \mu(W \sm C) = \mu(W) = \mu(C) + \mu(W \sm C),$$ 
so $ \mu(K) + \mu(V) = \mu(C)$,
and the statement is proved.
\end{proof}

When $X$ is compact Definition \ref{STMLC} simplifies to the following:

\begin{definition} \label{STMC} 
A signed real-valued topological measure on a compact space $X$ is a set function
$\mu: \ox \cup \cx \longrightarrow (-\infty, \infty)$  that satisfies the following conditions:
\begin{enumerate}[label=(c\arabic*),ref=(c\arabic*)]
\item \label{STMC1} 
if $A,B, A \sc B \in \kx \cup \ox $ then
$\mu(A\sqcup B)=\mu(A)+\mu(B);$
\item \label{STMC2}  
$\mu(U)=\lim\{\mu(K):K \in \bcx, \  K \se U\}$ for $U\in\ox$;
\end{enumerate}
\end{definition}

\begin{remark} \label{eqCoreva}
Condition \ref{STMC2} of Definition \ref{STMC}  is equivalent to: 
$$ \mu(F) = \lim\{ \mu(U) : \ F \se U, \ U \in \ox \}  \mbox{   for   }  F \in \cx.$$

As was noticed in \cite{Grubb:Signed}, condition \ref{STMC1} of Definition \ref{STMC} is equivalent to the following three conditions:
\begin{enumerate}[label=(\roman*),ref=(roman*)]
\item
$ \mu(U \sc V) = \mu(U) + \mu(V)$ for any two disjoint  open sets $U,V$.
\item
If $X= U \cup V, \ \ \ U , V \in \ox$ then $ \mu(U) + \mu(V) = \mu(X) + \mu(U \cap V)$.
\item
$\mu(X \sm U) = \mu(X) - \mu(U)$ for any open set $U$.
\end{enumerate}
Thus, when $X$ is compact,  a real-valued signed topological measure can be defined by its actions on open sets. 
\end{remark}

\begin{lemma} \label{sdtm=tm}
Suppose $ \mu$ is a signed deficient topological measure, $ \nu$ is a signed topological measure 
(respectively, a deficient topological measure), and $\mu = \nu$ on $\kx$.
Then $ \mu = \nu$ and $ \mu$ is a  signed topological measure (respectively, a deficient topological measure).
\end{lemma}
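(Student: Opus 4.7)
The plan is to reduce the lemma directly to Remark \ref{byCompacts}, which asserts that two signed deficient topological measures agreeing on $\kx$ must be equal on all of $\ox \cup \cx$. Accordingly, the first step is to verify that any signed topological measure, and any deficient topological measure, is already a signed deficient topological measure. For a signed topological measure, condition \ref{STM1} specialized to two disjoint compact sets gives \ref{SDTM1}, while \ref{STM2}--\ref{STM3} coincide verbatim with \ref{SDTM2}--\ref{SDTM3}; for a deficient topological measure this is already recorded in Remark \ref{DTMsdtm}.

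With that observation in place, both $\mu$ and $\nu$ are signed deficient topological measures agreeing on $\kx$. Remark \ref{byCompacts} then yields $\mu(A) = \nu(A)$ for every $A \in \ox \cup \cx$, i.e. $\mu = \nu$ as set functions. Since $\mu$ and $\nu$ coincide on their common domain, $\mu$ inherits every axiom satisfied by $\nu$: if $\nu$ is a signed topological measure, then $\mu$ satisfies \ref{STM1}--\ref{STM3}; if $\nu$ is a deficient topological measure, then $\mu$ satisfies \ref{DTM1}--\ref{DTM3}.

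There is no substantive obstacle here: the lemma is a one-line corollary of Remark \ref{byCompacts} together with the (trivial) containment of signed topological measures and deficient topological measures inside the class of signed deficient topological measures. The only point worth emphasizing in the write-up is that the hypothesis ``$\mu = \nu$ on $\kx$'' is genuinely enough to force equality on open sets as well, because both set functions are determined by their compact values via the inner-regularity clauses \ref{SDTM2} and \ref{SDTM3}.
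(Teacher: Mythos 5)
Your proposal is correct and follows essentially the same route as the paper: agreement on $\kx$ propagates to $\ox$ by inner regularity and then to $\cx$ by outer regularity (this is exactly the content of Remark \ref{byCompacts}), after which $\mu=\nu$ as set functions and $\mu$ trivially inherits the axioms of $\nu$. The paper merely spells out one instance of the additivity check explicitly, which your ``inheritance'' observation subsumes.
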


\begin{proof}
If $ \mu$ is a signed deficient topological measure, $ \nu$ is a signed topological measure, and $\mu = \nu$ on $\kx$, then also
$ \mu = \nu$ on $ \ox$. It is easy to check condition \ref{STM1} of Definition \ref{STMLC}
(for example,  $ \mu(U \sm K) + \mu(K) = \nu(U \sm K)  + \nu(K) = \nu(U) = \mu(U)$), and $ \mu$ is 
a signed topological measure. Similarly, $\mu$ is a deficient topological measure if $ \nu$ is.
\end{proof} 

\begin{lemma}  \label{STMnets}
Let $X$ be locally compact. Let $ \mu: \ox \cup \cx$ be a real-valued signed set function such that 
$\mu(U) = \mu(K) + \mu(U \sm K)$ 
for any 
$U \in \ox$ and any compact $ K \se U$.
Consider the following conditions: 
\begin{enumerate}[label=(p\arabic*),ref=(p\arabic*)]
\item \label{1}
For any  open set $U$, the limit of the net $\{ \mu(K)\}$ with index set $ \{ K \in \kx: \ K \se U \} $ ordered by inclusion
exists and 
$$ \mu(U) = \lim_{K \se U, \ K \in \kx} \mu(K).  $$
\item \label{2}
Given $U \in \ox$ and $\eps >0$, there exists $ K \in \kx, K \se U$  such that $ |\mu( E)|  < \eps$  for any $ E \se U \sm K, \ E \in \kx \cup \ox$.
\item \label{3}
Given $U \in \ox$ and $\eps >0$, there exists $ K \in \kx, K \se U$ such that $ |\mu( V)|  < \eps$  for any $ V \se U \sm K, \ V \in \ox$.
\item \label{4}
Given $U \in \ox$ and $\eps >0$, there exists $ K \in \kx, K \se U$ such that $ |\mu( C)|  < \eps$  for any $ C \se U \sm K, \ C \in \kx$.
\item \label{5}
For any  compact $K$, the limit of the net $\{ \mu(U)\}$ with index set $ \{ U \in \ox: \ K \se U \} $ ordered by reverse
inclusion exists and 
$$ \mu(K) = \lim_{U \supset K, \ U \in \ox} \mu(U). $$
\item \label{6}
Given $F \in \cx$ and $\eps >0$, there exists $U \supset F,  \ U \in \ox$ such that $ |\mu( C)|  < \eps$  for any $ C \se U \sm F, \ C \in \kx $.
\item \label{7}
Given $F \in \cx$ and $\eps >0$, there exists $U \supset F,  \ U \in \ox$ such that $ |\mu( V)|  < \eps$  for any $ V \se U \sm F, \ V \in \ox $.
\item \label{8}
Given $F \in \cx$ and $\eps >0$, there exists $U \supset F,  \ U \in \ox$ such that $ |\mu(E)|  < \eps$ for any 
$E \se U \sm F, \ E \in \kx \cup \ox$.
\item \label{9}
For any  closed set $F$, the limit of the net $\{ \mu(U) \}$ with index set $ \{ U \in \ox: \ F \se U \} $ ordered by reverse
inclusion exists and 
$$ \mu(F) = \lim_{U \supset F, \ U \in \ox} \mu(U). $$
\end{enumerate}
Then \ref{1},  \ref{2}, and  \ref{3} are equivalent and imply \ref{4}  and  \ref{5}.
If $X$ is compact,  then  \ref{1},  \ref{2}, \ref{3},  \ref{5}, \ref{7},  \ref{8}, \ref{9} are equivalent 
and imply equivalent  conditions  \ref{4}, \ref{6}.
\end{lemma}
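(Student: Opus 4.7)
The plan is to first derive a disjoint-compact additivity principle from the single hypothesis, then establish \ref{1}$\Leftrightarrow$\ref{2}$\Leftrightarrow$\ref{3} and the implications to \ref{4}, \ref{5} by moving between compact and open sets via Lemma \ref{easyLeLC}, and finally exploit complementation in the compact case to close the remaining equivalences.

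The enabling preliminary is that $\mu(K_1 \sc K_2) = \mu(K_1) + \mu(K_2)$ whenever $K_1, K_2$ are disjoint compact sets. To prove this I would separate $K_1, K_2$ by disjoint open sets $V_1, V_2$ (possible in the locally compact Hausdorff setting), set $W = V_1 \cup V_2$, and apply the hypothesis twice---first to $W$ with $K_1$, then to the open set $(V_1 \sm K_1) \cup V_2$ with $K_2$---to obtain $\mu(W) = \mu(K_1) + \mu(K_2) + \mu((V_1 \sm K_1) \cup (V_2 \sm K_2))$. Comparing with the hypothesis applied to $W$ and $K_1 \cup K_2$ directly yields the claim. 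With this in hand, the implications among \ref{1}--\ref{3} become routine. The implications \ref{2}$\Rightarrow$\ref{3} and \ref{2}$\Rightarrow$\ref{4} are immediate. For \ref{3}$\Rightarrow$\ref{2}, given compact $C \se U \sm K$, Lemma \ref{easyLeLC} encloses $C$ in an open $W \se U \sm K$, and the hypothesis gives $|\mu(C)| \le |\mu(W)| + |\mu(W \sm C)| < 2\eps$. For \ref{2}$\Rightarrow$\ref{1}, the hypothesis forces $|\mu(U) - \mu(K)| = |\mu(U \sm K)| < \eps$; for a larger compact $K' \supseteq K$ inside $U$, enclose $K'$ in an open $W \se U$ via Lemma \ref{easyLeLC} and compare the two decompositions $\mu(W) = \mu(K) + \mu(W \sm K) = \mu(K') + \mu(W \sm K')$, using smallness of $|\mu(W \sm K)|$ and $|\mu(W \sm K')|$. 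For \ref{1}$\Rightarrow$\ref{3}, pick $K$ witnessing the net limit for $U$, and for an open $V \se U \sm K$ apply \ref{1} to $V$ to obtain a compact $K_V \se V$ approximating $\mu(V)$; the disjoint-compact additivity gives $\mu(K_V) = \mu(K \cup K_V) - \mu(K)$, and the resulting bounds combine to control $|\mu(V)|$. For \ref{1}$\Rightarrow$\ref{5}, enclose the compact $K$ in an open $W$ with $\overline{W}$ compact, apply \ref{2} to the open set $W \sm K$ to extract a compact $K_1 \se W \sm K$, and set $U_0 := W \sm K_1$; any open $U$ with $K \se U \se U_0$ satisfies $U \sm K \se (W \sm K) \sm K_1$, hence $|\mu(U) - \mu(K)| = |\mu(U \sm K)| < \eps$.

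In the compact case, complementation closes the loop. For \ref{5}$\Rightarrow$\ref{1}, let $F = X \sm U$; since $X$ is compact this is compact. The identity $\mu(X) = \mu(F) + \mu(U) = \mu(K') + \mu(X \sm K')$ gives $\mu(U) - \mu(K') = \mu(X \sm K') - \mu(F)$, so the \ref{5}-estimate on $F$ (valid once $K' \supseteq X \sm U_0$ for the neighborhood $U_0$ of $F$ produced by \ref{5}) transfers directly to the \ref{1}-estimate on $U$. Since closed coincides with compact, \ref{5}$\Leftrightarrow$\ref{9}. For \ref{3}$\Rightarrow$\ref{7}, apply \ref{3} to the open set $X \sm F$ to produce a compact $K$ there; then $U_0 := X \sm K$ is an open neighborhood of $F$ with $U_0 \sm F = (X \sm F) \sm K$, so the \ref{3}-smallness on opens inside $U_0 \sm F$ is immediate. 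The implication \ref{7}$\Rightarrow$\ref{9} follows from the hypothesis identity $\mu(U') - \mu(F) = \mu(U' \sm F)$ for open $U'$ with $F \se U' \se U_0$. The remaining implications \ref{8}$\Rightarrow$\ref{7}, \ref{7}$\Rightarrow$\ref{6}, \ref{6}+\ref{7}$\Rightarrow$\ref{8} mirror the arguments between \ref{3}, \ref{2}, \ref{4} (via Lemma \ref{easyLeLC}), and \ref{4}$\Leftrightarrow$\ref{6} follows from the same $K \leftrightarrow X \sm U$ duality used for \ref{3}$\Leftrightarrow$\ref{7}.

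The main obstacle is the preliminary additivity claim combined with the bootstrap \ref{1}$\Rightarrow$\ref{3}: one must promote the mixed open-minus-compact additivity of the hypothesis into genuine additivity on disjoint compact sets, and then use it to transfer net-limit information from compact approximants of $U$ to arbitrary open subsets of the ``annulus'' $U \sm K$. After that, everything reduces to enclosure via Lemma \ref{easyLeLC} and, in the compact case, complementation through the identity $\mu(X) = \mu(F) + \mu(X \sm F)$.
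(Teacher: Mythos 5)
Your proposal is correct and follows essentially the same strategy as the paper: shuttle between compact and open sets using the identity $\mu(U)=\mu(K)+\mu(U\sm K)$, enclose compacts in open sets via Lemma \ref{easyLeLC}, and use complementation ($F\leftrightarrow X\sm F$) to close the loop when $X$ is compact; your cycle \ref{1}$\Rightarrow$\ref{3}$\Rightarrow$\ref{2}$\Rightarrow$\ref{1} and the compact-case chain \ref{3}$\Rightarrow$\ref{7}$\Rightarrow$\ref{9}$\Leftrightarrow$\ref{5}$\Rightarrow$\ref{1} all check out. The one genuine divergence is your preliminary disjoint-compact additivity lemma (proved by separating $K_1,K_2$ with disjoint open sets), which you use for \ref{1}$\Rightarrow$\ref{3}: it is correct, but the paper avoids it by applying the hypothesis to the open set $U\sm K$ itself, writing $\mu(D)=\mu(U\sm K)-\mu((U\sm K)\sm D)$ for compact $D\se U\sm K$ and noting that both terms are controlled because $(U\sm K)\sm D=U\sm(K\sqcup D)$ with $K\se K\sqcup D\se U$; this gets \ref{1}$\Rightarrow$\ref{2} in one step without any separation argument.
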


\begin{proof}
\ref{1} $ \Longrightarrow $ \ref{2}: Let $ U \in \ox, \eps>0$. By \ref{1} pick $ K  \in \kx$ such that  
$| \mu(U \sm C)| =  |\mu(U) - \mu(C)|  < \eps$ for any $C \in \kx$ satisfying $ K \se C \se U$.
For any compact $ D \se U \sm K$ we have $\mu(D) = \mu(U \sm K) - \mu(( U \sm K) \sm D)$ and so 
$ | \mu(D)|  < 2 \eps.$ For any open set $ V \se U \sm K$  we then see that $ |\mu(V)| \le 2 \eps$, 
and so \ref{2} follows. 
Obviously, \ref{2} $ \Longrightarrow $ \ref{3}. \ref{3} $ \Longrightarrow $ \ref{1}: For $U \in \ox$ and $ \eps >0$ pick $K  \in \kx$
such that $ |\mu( V)|  < \eps$  for any open set $ V \se U \sm K$. Then for any compact $C$ such that $ K \se C \se U$ we have
$ U \sm C \se U \sm K$, so $ | \mu(U) - \mu(C)| = | \mu(U \sm C)| < \eps$, and  \ref{1} follows. 
Thus \ref{1},  \ref{2}, and \ref{3} are equivalent. 
Obviously, \ref{2} $ \Longrightarrow $\ref{4}. \ref{3} $ \Longrightarrow $ \ref{5}: Let $ K \in \kx, \eps >0$. By \ref{3} for the set $U = X \sm K$ 
find compact $ C \se U$ such that $ | \mu(V)| < \eps$ for any open set $ V \se U \sm C$. Setting $W_{\eps} = X \sm C$ we see that 
$ K \se W_{\eps}$ and for any open $W$ such that $K \se W \se W_{\eps}$ we  have 
$ W \sm K \se W_{\eps} \sm K  = (X \sm C) \sm K = (X \sm K) \sm C = U \sm C$, so 
$ | \mu(W) - \mu(K) | = | \mu(W \sm K)| < \eps$, which shows  \ref{5}.  

Suppose $X$ is compact.  \ref{1},  \ref{2}, and \ref{3} are equivalent and imply \ref{5}, which is equivalent to \ref{9}.
From the duality between open and closed sets it is easy to see that \ref{9}  $ \Longrightarrow $ \ref{1},  
\ref{2} $ \Longleftrightarrow $  \ref{8},  \ref{3} $ \Longleftrightarrow$ \ref{7}, and  
\ref{4}  $\Longleftrightarrow$  \ref{6}. This finishes the proof.
\end{proof}

\begin{remark} 
Condition \ref{STMC2} of Definition \ref{STMC} and 
condition \ref{STM2} of Definition \ref{STMLC} (for a real-valued signed topological measure)
could be replaced by any of the equivalent conditions in Lemma \ref{STMnets}.
\end{remark}

We denote by $\STM(X)$ the collection of all signed topological measures on $X$ for which $\norm \mu \norm < \infty$. 
As in Theorem \ref{SDTMban}, we have:

\begin{theorem} \label{STMbana}
The space $\STM(X)$ is a normed linear space under either of the two equivalent norms:
$\norm \mu \norm_1 = \sup \{ | \mu(A)|: \ \ \ A \in \ax \}, \ \ \ \norm \mu \norm_2 = \sup \{ | \mu| (A): \ \ \ A \in \ax \} = |\mu|(X)$.
\end{theorem}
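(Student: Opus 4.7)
The plan is to follow the argument of Theorem~\ref{SDTMban}, observing that the containment $\STM(X) \subseteq \SDTM(X)$ essentially reduces the result to what has already been done. Indeed, comparing Definitions~\ref{STMLC} and~\ref{SDTM}, axiom \ref{STM1} is a strengthening of \ref{SDTM1} while \ref{STM2} and \ref{STM3} coincide with their SDTM counterparts, so every signed topological measure of finite norm is also a signed deficient topological measure of finite norm. In particular, $\norm\cdot\norm_1$ and $\norm\cdot\norm_2$ are well-defined on $\STM(X)$, the identity $\norm\mu\norm_2 = |\mu|(X)$ holds (using that $|\mu|$ is a deficient topological measure by Remark~\ref{lapldtm1} and hence inner compact regular on the open set $X$), and the equivalence $\norm\mu\norm_1 \le \norm\mu\norm_2 \le 2\norm\mu\norm_1$ follows by applying Lemma~\ref{tildenu} to $U = X$, since $\widehat\mu(X) = \sup\{|\mu(K)| : K \in \kx\} = \norm\mu\norm_1$.

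The substantive point left is that $\STM(X)$ is in fact closed under real linear combinations, not merely a subset of $\SDTM(X)$. For $\mu_1, \mu_2 \in \STM(X)$ and $\alpha, \beta \in \R$, the function $\alpha \mu_1 + \beta \mu_2$ is pointwise well-defined since finite norm forces each $\mu_i$ to be real-valued on $\ox \cup \cx$. Axiom \ref{STM1} passes through real linear combinations by distributivity; the smoothness conditions \ref{STM2} and \ref{STM3} are preserved because real linear combinations commute with limits of convergent real-valued nets; and the triangle inequality $\norm{\alpha \mu_1 + \beta \mu_2}_1 \le |\alpha| \norm{\mu_1}_1 + |\beta| \norm{\mu_2}_1$ ensures finiteness of the norm on the combination.

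For the norm axioms themselves: nonnegativity, homogeneity, and the triangle inequality for $\norm\cdot\norm_1$ are immediate from the $\sup$ definition, and definiteness uses Remark~\ref{byCompacts} (vanishing on $\kx$ forces $\mu \equiv 0$). For $\norm\cdot\norm_2$, the only non-routine axiom is the triangle inequality $|\mu_1 + \mu_2|(X) \le |\mu_1|(X) + |\mu_2|(X)$, which follows directly from Definition~\ref{laplu}: for each finite disjoint compact family $\{K_i\}$ one has $|(\mu_1+\mu_2)(K_i)| \le |\mu_1(K_i)| + |\mu_2(K_i)|$, and summing gives the bound after splitting the supremum. I do not anticipate a genuine obstacle here; the entire proof is a reprise of Theorem~\ref{SDTMban}, with the one new ingredient being the easy verification that the stronger additivity axiom \ref{STM1} is stable under real linear combinations.
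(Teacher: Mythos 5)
Your proposal is correct and follows the same route as the paper, which gives no separate proof of Theorem~\ref{STMbana} beyond the remark ``As in Theorem~\ref{SDTMban}'': the norm equivalence comes from Lemma~\ref{tildenu} applied to $U=X$, and the linear-space structure from the containment in $\SDTM(X)$. Your extra verification that the stronger additivity axiom \ref{STM1} survives real linear combinations is exactly the small point the paper leaves implicit, and it is handled correctly.
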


\begin{remark}
Signed topological measures of finite norm on a compact space were introduced in  \cite{Grubb:Signed} 
then studied or used in \cite{Grubb:SignedqmDimTheory}, 
\cite{OrjanAlf:CostrPropQlf},  \cite{Svistula:Signed}, \cite{Svistula:Convergence}. 
In these papers different definitions of a signed topological measure
were given, but their equivalence, as well as equivalence to our Definition \ref{STMC}, follows from Lemma \ref{STMnets}
and Theorem \ref{STMbana}.
\end{remark} 
 
\begin{remark}
Since any signed topological measure is a signed deficient topological measure, 
we may use the definitions and results from section \ref{seSDTM}. 
Note that  in general, when $\mu$ is a signed topological measure,  
$\muplu, \mumin$ and $|\mu|$ are deficient topological measures, but not topological measures.
See, for instance, Example 25 in \cite{OrjanAlf:CostrPropQlf}. 
It is easy to see that if $\mu$ is a signed measure then $\muplu, \mumin$ and $| \mu|$ are the classical
positive, negative, and total variations of a signed measure.
\end{remark}

\begin{lemma}  \label{SDTMnSTMex}
Suppose $X$ is locally compact, $\nu$ is a deficient topological measure on $X$, which is not a 
topological measure on $X$,  $\mu$ is a topological measure on $X$, and $ \norm \nu \norm , \norm \mu \norm < \infty$.
Then $\la = \nu - \mu$ is a signed deficient topological measure on $X$
which is not a signed topological measure. 
\end{lemma}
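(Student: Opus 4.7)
The plan is to verify the two assertions separately. First, to show $\la = \nu - \mu$ is a signed deficient topological measure, I observe that $\la$ is real-valued: since $\norm \nu \norm < \infty$, applying (DTM2) with $U = X$ gives $\nu(X) = \norm \nu \norm < \infty$, so by monotonicity of $\nu$ as a deficient topological measure, $\nu$ is finite on all of $\ox \cup \cx$; the same holds for $\mu$. Hence $\la = \nu - \mu$ is a well-defined real-valued set function on $\ox \cup \cx$. Condition (SDTM1) then follows immediately from finite additivity of $\nu$ and $\mu$ on $\kx$.

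For conditions (SDTM2) and (SDTM3), I exploit that by Remark \ref{DTMsdtm} both $\nu$ and $\mu$ are themselves signed deficient topological measures. For $U \in \ox$, the nets $\{\nu(K)\}$ and $\{\mu(K)\}$ indexed by $\{K \in \kx : K \se U\}$ (ordered by inclusion) therefore converge to the finite values $\nu(U)$ and $\mu(U)$. Since both limits are finite, the difference net $\{\la(K)\}$ converges to $\nu(U) - \mu(U) = \la(U)$, giving (SDTM2). Condition (SDTM3) follows by the same reasoning applied to the nets of open neighborhoods of a closed set $F$, ordered by reverse inclusion.

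To see that $\la$ fails to be a signed topological measure, I argue by contradiction: suppose $\la$ satisfies (STM1). Then for any $A, B \in \kx \cup \ox$ with $A \sqcup B \in \kx \cup \ox$, combining (STM1) for $\la$ with (TM1) for $\mu$ gives
\[
\nu(A \sqcup B) = \la(A \sqcup B) + \mu(A \sqcup B) = \la(A) + \la(B) + \mu(A) + \mu(B) = \nu(A) + \nu(B),
\]
so $\nu$ would satisfy (TM1). Since $\nu$ is already a deficient topological measure and its conditions (DTM2), (DTM3) coincide verbatim with (TM2), (TM3), this would make $\nu$ a topological measure, contradicting the hypothesis. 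The only (very minor) subtlety throughout is the elementary fact that for convergent nets in $\r$ with finite limits, the limit of the difference equals the difference of the limits; I foresee no substantive obstacle.
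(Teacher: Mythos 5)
Your proof is correct, and its second half takes a genuinely different route from the paper's. For the first assertion you essentially re-prove Remark \ref{SDTMcone} (that the real-valued signed deficient topological measures form a linear space), which is all the paper cites: the observation that $\nu(X)=\norm \nu \norm<\infty$ forces $\nu$, $\mu$, and hence $\la$ to be real-valued, after which the difference of two convergent real nets converges to the difference of the limits. For the second assertion the paper invokes Theorem 29 of \cite{Butler:DTMLC} to produce an open $U$ and a compact $C\se U$ with $\nu(U)>\nu(C)+\nu(U\sm C)$, and then exhibits the same strict inequality for $\la$, i.e.\ a concrete witness to the failure of \ref{STM1}. You instead argue by contradiction: if $\la$ satisfied \ref{STM1}, then $\nu=\la+\mu$ would satisfy \ref{TM1} (all quantities involved are finite, so the rearrangement is legitimate), and since \ref{DTM2} and \ref{DTM3} coincide verbatim with \ref{TM2} and \ref{TM3}, $\nu$ would be a topological measure, contradicting the hypothesis. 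Your argument is more self-contained, since it avoids the cited characterization of deficient topological measures that fail to be topological measures via strict superadditivity on some pair $(C, U\sm C)$; the paper's argument buys an explicit triple on which additivity fails for $\la$, which is more informative but relies on the external result. Both are valid.
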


\begin{proof}
By Remark \ref{SDTMcone} $\la$ is a signed deficient topological measure on $X$. 
Since $ \nu$ is not a topological measure, by Theorem 29 in Section 4 of \cite{Butler:DTMLC} 
there exist open $U$ and compact $C \se U$ such that 
$\nu(U) > \nu(C) + \nu(U \sm C)$.  Since $\mu$ is a topological measure, 
$\mu(U) = \mu(C) + \mu(U \sm C)$. 
Then 
\begin{align*}
\la(U) &= \nu(U) -\mu(U) = \nu(U) - \mu(C) -\mu(U \sm C) \\
&> \nu(C) + \nu(U \sm C) -\mu(C) -\mu(U \sm C) = \la(C) + \la (U \sm C),
\end{align*}
which shows that $\la$ is not a signed topological measure.
\end{proof}

The next few results allow us to use a smaller collection than $\kx$ to check the equality of two signed topological measures.
A set $A \subseteq X$ is called bounded if $\cl A$ is compact. 
A set $A$ is called solid if $A$ is  connected, and $X \sm A$ has only unbounded connected components.
Let $\bcox, \ \bccx, \ \bcsx$  and $ \bosx$ denote, respectively,  
the family of finite unions of disjoint compact connected sets, the family of compact connected sets,  the family of compact solid sets,
and the family of bounded open solid sets.

When $X$ is compact, a set is called solid if it and its complement are both connected.
For a compact space $X$ we define a certain topological characteristic, genus. 
See \cite{Aarnes:ConstructionPaper} for more information about genus $g$ of the space. 
A compact space has genus 0 iff any finite union of disjoint closed solid sets has a connected complement.
Intuitively, $X$ does not have holes or loops.
In the case where $X$ is locally path connected, $g=0$ if the fundamental group $\pi_1(X)$ is finite (in particular, if $X$ is 
simply connected). Knudsen \cite{Knudsen} was able to show that if 
$H^1(X) = 0 $ then $g(X) = 0$, and in the case of CW-complexes the converse also holds.

\begin{example} \label{STMnotSM}
Let $X = \r^2$.
Consider $ \mu = \nu - \frac12 \delta$, where $\nu$ is a simple topological measure which is not a measure on $X$,
and $ \delta$ is a point mass.  
(As $\nu$ one may take, for instance, a topological  measure from Example 1 or Example 2 in  \cite{Butler:TechniqLC}). 
Then $ \mu$ is a signed topological measure. We shall show that $ \mu$ is not a signed measure by 
demonstrating that $ \mu^+$ is not subadditive.
By Theorem 34 in Section 4 of \cite{Butler:DTMLC}, there are open sets 
$U, V$ such that $ \nu(U \cup V) > \nu(U) + \nu(V)$. Since $\nu$ is simple, we have $ \nu(U \cup V) = 1, \nu(U) = \nu(V) = 0$.
Pick also a compact $ K \se U \cup V$ such that $ \nu(K) = 1$. Then $ \mu^+ (U \cup V) \ge \mu(K) \ge 1 - \frac12 = \frac12$.
Since $ \mu^+(U) = \sup\{ \mu(C): \, C \in \kx, C \se U\} \le  \sup\{ \nu(C): \, C \in \kx, C \se U\}  = \nu(U) =0$, 
we have $\mu^+(U) =0$. Also, $ \mu^+(V) = 0$, and we see that $ \mu^+$ is not subadditive.
\end{example}


\begin{remark} \label{Vloz}
Let $X$ be locally compact. 
We denote by $M(X)$ the collection of all Borel measures on $X$ that are inner regular on open sets and outer regular 
(restricted to $\ox \cup \cx$). Thus, $M(X)$ includes regular Borel measures and Radon measures. Let $SM(X)$ be the family of signed measures 
that are differences of two measures from $M(X)$, one of which is finite. 
We have:
\begin{align} \label{incluMTD}
 M(X) \subsetneqq  TM(X)  \subsetneqq  DTM(X)
\end{align}
and 
\begin{align} \label{incluSMTD}
 SM(X) \subsetneqq  STM(X)  \subsetneqq  SDTM(X).
\end{align}

The inclusions follow from the definitions. Inclusions in (\ref{incluSMTD}) are proper by Lemma \ref{SDTMnSTMex} and Example \ref{STMnotSM}.
When $X$ is compact, there are examples of topological measures that are not measures 
and of deficient topological measures that are not topological measures in numerous papers, 
beginning with \cite{Aarnes:TheFirstPaper}, \cite{OrjanAlf:CostrPropQlf}, and  \cite {Svistula:Signed}.
When $X$ is locally compact, see \cite{Butler:TechniqLC},
Sections 5 and 6 in \cite{Butler:DTMLC}, and  Section 9 in \cite{Butler:TMLCconstr} 
for more information on proper inclusion in (\ref{incluMTD}),  criteria for a topological measure to be a measure from $ \M(X)$, and various examples.
\end{remark}

\begin{remark} \label{STMlimCo}
As in Remark 14 in Section 3 of \cite{Butler:DTMLC}, we have the following.
Let $\nu$ be a  signed deficient topological measure on $X$.
If $X$ is locally compact and locally connected then for each open set $U$
\[  \nu(U) = \lim\{ \nu(K) : \ K \subseteq U , \ K \in \bcox \}. \]
If $X$ is  locally compact, connected, and locally connected, then 
\[  \nu(X)  = \lim\{ \nu(K) : \  K \in \bccx \}, \] 
and also 
\[  \nu(X)  = \lim\{ \nu(K) : \  K \in \bcsx \}. \] 
\end{remark}

\begin{lemma} \label{STMlimSol}
Let $\nu$ be a signed topological measure on a locally compact, locally connected space $X$.
If $U \in \bosx$ then  $ \nu(U) = \lim\{ \nu(C) : \ C \se U, \ C \in \bcsx \}$.
\end{lemma}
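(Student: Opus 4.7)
The plan is to reduce the assertion to the net convergence in condition \ref{STM2} via a purely topological cofinality claim: \emph{for every compact $K \se U$ there exists a compact solid set $C$ with $K \se C \se U$.} Granting this, the union of any two compact solid subsets of $U$ is compact, and the claim applied to that union yields a compact solid upper bound; hence $\{C \in \bcsx : C \se U\}$ is directed by inclusion and cofinal in $\{K \in \kx : K \se U\}$. The compact solid net is then a subnet of the compact net from \ref{STM2} and therefore has the same limit $\nu(U)$---in the finite case by the usual $\eps$-argument, and in the $\pm\infty$ cases by the analogous divergence statement.

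To establish the cofinality claim I first build a compact connected set $L$ with $K \se L \se U$. By local compactness and local connectedness, cover $K$ by finitely many open connected sets $V_1, \ldots, V_n$ with each $\overline{V_i}$ compact and contained in $U$. Because $U$ is connected and $X$ is locally connected, a standard chain argument---the set of points in $U$ joinable to a fixed base point by a finite chain of basic open connected sets with compact closure in $U$ is both open and closed in $U$---shows that any two points of $U$ lie in a common compact connected subset of $U$. Joining the sets $\overline{V_i}$ by finitely many such continua produces the required $L$.

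Next I form the \emph{solid hull} $C := L \cup \bigcup\{B : B \text{ a bounded connected component of } X \sm L\}$. Since $X$ is locally connected, each component of $X \sm L$ is open with boundary contained in $L$; consequently $X \sm C$ is the open union of the unbounded components, $C$ is closed, and $C$ is connected because each $L \cup B$ is connected and all of them share $L$. The components of $X \sm C$ are precisely the unbounded components of $X \sm L$, so $C$ is solid. The crucial step---where the solidity of $U$ enters---is the containment $C \se U$: if some bounded $B$ met $X \sm U$, then since $X \sm U \se X \sm L$ a whole component of $X \sm U$ would lie inside $B$; but every component of $X \sm U$ is unbounded, contradicting $B$ bounded. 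Therefore $C \se U \se \overline{U}$, and since $U$ is bounded, $\overline{U}$ is compact, so $C$ is compact as well. The main obstacle is exactly this solid-hull step: ensuring that filling in the holes of $L$ does not escape $U$, which is where the hypothesis that $U$ itself is solid becomes essential.
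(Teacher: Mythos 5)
Your proof is correct, and it is more self-contained than the paper's: the paper disposes of this lemma in one line by citing Lemma 21 in Section 3 of \cite{Butler:TMLCconstr}, whereas you reconstruct the underlying content. The two ingredients you supply --- (i) the reduction of the limit over $\bcsx$ to the limit over $\kx$ in \ref{STM2} via directedness and cofinality of the compact solid subsets, and (ii) the topological fact that every compact $K \se U$ sits inside a compact solid $C \se U$, obtained by first fattening $K$ to a compact connected $L$ by a chain argument and then passing to the solid hull of $L$ --- are exactly the machinery of the cited reference (compare the solid hull Lemmas 18 and 20 of \cite{Butler:TMLCconstr}, which this paper invokes in the proof of Theorem \ref{STMSolid}); the observation that the bounded components of $X \sm L$ cannot escape $U$ because every component of $X \sm U$ is unbounded is precisely where solidity of $U$ is used there as well. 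One small ordering point: to see that $L \cup B$ is connected you need $\partial B \ne \O$, which is automatic if $X$ is connected but in general is best deduced \emph{after} you have shown $B \se U$ (so that $B$ lies in the component of $X$ containing $L$ and cannot be clopen); since the surrounding results of the paper all assume $X$ connected, this is cosmetic rather than a gap. The subnet step is also sound in the extended-real-valued case, since a cofinal monotone subnet of a net converging in $[-\infty,\infty]$ converges to the same limit.
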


\begin{proof}
Follows from Lemma 21 in Section 3 of \cite{Butler:TMLCconstr}.
\end{proof}

\begin{theorem} \label{STMSolid}
Let $X$ be a locally compact, connected, and locally connected space.
If $\nu$ and $\mu$ are real-valued signed topological measures on  $X$  
such that $\mu = \nu$ on compact solid sets then $\nu = \mu$.
\end{theorem}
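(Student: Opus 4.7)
By Remark \ref{byCompacts}, it suffices to verify $\nu(K) = \mu(K)$ for every $K \in \kx$, and the plan is to propagate the hypothesized equality on $\bcsx$ outward through the chain of families
\[ \bcsx \ \longrightarrow \ \bosx \ \longrightarrow \ \bccx \ \longrightarrow \ \bcox \ \longrightarrow \ \ox \ \longrightarrow \ \kx. \]

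The first step is immediate from Lemma \ref{STMlimSol}: each $V \in \bosx$ is the limit of $\nu(C)$ along $\{C \in \bcsx : C \se V\}$, and similarly for $\mu$, so the hypothesis forces $\nu = \mu$ on $\bosx$. For the second step, fix $K \in \bccx$. Since $\nu$ and $\mu$ are real-valued, Theorem \ref{STMsmoo} yields $\nu(K) = \lim\{\nu(U) : U \in \ox,\ K \se U\}$ and likewise for $\mu$. The idea is to refine this net to one lying inside $\bosx$: using Lemma \ref{easyLeLC} choose a precompact open neighborhood $W$ of $K$; local connectedness allows restriction to connected open neighborhoods of $K$ inside $W$; finally, a filling-in trick (adjoin the bounded components of the complement) should produce a cofinal family in $\bosx$ shrinking to $K$. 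Combined with Step~1 this forces $\nu(K) = \mu(K)$.

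Given Step~2, finite additivity on compact sets (condition \ref{STM1}) extends the equality to $\bcox$, since each such set is a finite disjoint union of compact connected sets. Then Remark \ref{STMlimCo} gives, for any $U \in \ox$, $\nu(U) = \lim\{\nu(K) : K \in \bcox,\ K \se U\} = \mu(U)$, and outer regularity (condition \ref{STM3}) transfers the equality to every closed, hence every compact, set, completing the argument via Remark \ref{byCompacts}.

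The main obstacle I foresee is the topological content of Step~2: verifying that in a locally compact, connected, locally connected space every compact connected set admits a neighborhood base consisting of sets in $\bosx$. I expect this to follow from a careful filling-in argument inside a precompact connected neighborhood, but since the hypothesis does not include genus zero one must take some care about whether bounded complementary components can escape the chosen neighborhood. All remaining steps are routine propagation using the $\tau$-smoothness and additivity supplied by Theorem \ref{STMsmoo} and Lemmas \ref{finAddOC} and \ref{STMnets}.
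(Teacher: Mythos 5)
Your overall propagation scheme ($\bcsx \to \bosx \to \bccx \to \bcox \to \ox \to \kx$) is the same as the paper's, and every link except the second is carried out correctly. The gap is exactly the one you flag, and it is fatal to Step 2 as proposed: a compact connected set need not admit a neighborhood base (or even a single small neighborhood) consisting of bounded open solid sets. Take $X = \R^2$ and $C$ the unit circle. If $A \in \bosx$ contains $C$, then every component of $X \sm A$ is unbounded; but $X \sm A \se X \sm C$, so any component of $X \sm A$ meeting the open unit disk is a connected subset of $X \sm C$, hence lies entirely inside the disk and is bounded. Therefore $X \sm A$ cannot meet the disk, i.e.\ every bounded open solid set containing $C$ contains the closed unit disk. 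Your ``filling-in'' of the bounded complementary components of a connected precompact open neighborhood of $C$ produces sets that shrink to the solid hull $\tilde C$ of $C$, not to $C$ itself; the limit you would extract is $\nu(\tilde C) = \mu(\tilde C)$, which is already known because $\tilde C$ is compact solid, and it gives no information about $\nu(C)$.

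The paper closes this gap by doing the filling-in at the level of $C$ rather than of its neighborhoods: one writes $X = \tilde C \sc \bsc_{i=1}^n U_i = C \sc \bsc_{s \in S} V_s \sc \bsc_{i=1}^n U_i$, where the $U_i$ are the finitely many unbounded components and the $V_s$ the bounded components of $X \sm C$, and $\tilde C$ is the compact solid hull. The first equality together with $\nu(X) = \mu(X)$ (which you also need to record, via Remark \ref{STMlimCo} applied to compact solid sets) and additivity gives $\nu(\bsc_{i=1}^n U_i) = \mu(\bsc_{i=1}^n U_i)$; each $V_s$ lies in $\bosx$, so $\nu = \mu$ on it by your Step 1; additivity on open sets then isolates $\nu(C) = \mu(C)$ from the second equality. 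With Step 2 replaced by this argument, the remaining steps of your proposal go through as written. (A minor point: the approximation $\nu(K) = \lim\{\nu(U) : K \se U,\ U \in \ox\}$ is just outer regularity \ref{STM3}; Theorem \ref{STMsmoo} is not needed for it.)
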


\begin{proof}
Suppose that $\nu(K) = \mu(K)$ for every $K \in \bcsx$. 
By Remark \ref{STMlimCo} $ \nu(X) = \mu(X)$. 
By Lemma \ref{STMlimSol} $\nu(V) = \mu(V)$ for every $V \in \bosx$.  
For a compact connected set $C$ by Lemma 18 in Section 3 of \cite{Butler:TMLCconstr} we have: 
\begin{eqnarray} \label{uravSolHu}
X = \tilde C \sc \bsc_{i=1}^n U_i = C \sc \bsc _{s \in S} V_s  \sc \bsc_{i=1}^n U_i, 
\end{eqnarray}
where $U_i$ are unbounded connected components of $X \sm C$, $V_s$ are bounded connected components of $X \sm C$, 
and $\tilde C$ is a solid hull of $C$.  By Lemma 20 in Section 3 of \cite{Butler:TMLCconstr}, $\tilde C$ is a compact solid set. 
Since  $\mu = \nu$ on compact solid sets and $\nu(X) = \mu(X)$, from the first equality in (\ref{uravSolHu})
by finite additivity of signed topological measures on $\ox \cup \kx$
we see that $\nu(\bsc_{i=1}^n U_i ) = \mu(\bsc_{i=1}^n U_i)$.   
By Lemma 17 in Section 3 of \cite{Butler:TMLCconstr} each $V_s \in \bosx$. 
By Lemma \ref{finAddOC} both $ \mu$ and $ \nu$ are additive on open sets, so 
from  (\ref{uravSolHu}) we have: $\nu(C) = \mu(C)$ 
for each $C \in \bccx$. Then $\nu = \mu$ on $\bcox$. Then by Lemma \ref{STMlimCo} $\nu = \mu$ on $\ox$.
Thus, $\nu = \mu$.
\end{proof} 

\begin{remark}
Theorem \ref{STMSolid} suggests the possibility of obtaining a signed topological measure as the unique extension 
of a suitable set function defined only on bounded solid sets with methods similar to ones in  \cite{Butler:TMLCconstr} 
(where on a locally compact, connected, locally connected space a solid-set function is extended uniquely to a topological measure) 
and in section 8 in \cite{OrjanAlf:CostrPropQlf} 
(where on a connected, locally connected, compact Hausdorff space a signed solid-set function is extended uniquely to a finite signed topological measure).  
\end{remark} 

\section{Decomposition of signed topological measures into deficient topological measures}
  
\begin{lemma} \label{sdtmDEC}
Let $X$ be locally compact.
Let $\la: \kx \cup \ox \rightarrow [- \infty, \infty] $ be a signed set function that assumes at most one of $\infty,  - \infty$. 
Suppose $\la$ is finitely additive on $\kx$ and 
$\la(U) = \la(U \sm K )  + \la(K)$ for any open $U$ and any compact $K \se U$ .
Let $\laplu, \lamin$ and $| \la|$ be as in Definition \ref{laplu}.
Then 
\begin{enumerate}[label=(\Roman*),ref=(\Roman*)]
\item \label{firpaU}
$\la (U)  = \laplu (U) - \lamin (U)$ for any open set $U$ such that at least one of $\laplu(U), \lamin(U)$ is finite.
\item
$| \la |  = \laplu + \lamin$.
\end{enumerate}
\end{lemma}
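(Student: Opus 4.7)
The plan is to combine the additive identity $\la(U) = \la(K) + \la(U \sm K)$ for compact $K \se U$ with compact sets nearly realizing the suprema defining $\laplu(U)$ and $\lamin(U)$, approximating $\la$ on the complementary open set by its values on compact subsets.

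For part (I), by symmetry (applied to $-\la$ if necessary) I would assume $\lamin(U) < \infty$. Given $\eps > 0$, pick a compact $K_+ \se U$ with $\la(K_+) > \laplu(U) - \eps$ (or with $\la(K_+)$ arbitrarily large if $\laplu(U) = \infty$). For any compact $D \se U \sm K_+$ the disjoint union $K_+ \sqcup D$ is a compact subset of $U$, so $\la(K_+) + \la(D) = \la(K_+ \sqcup D) \le \laplu(U)$, which forces $\la(D) < \eps$ and hence $\laplu(U \sm K_+) \le \eps$. At the same time $\la(D) \ge -\lamin(U)$ for every such $D$. Approximating $\la(U \sm K_+)$ by $\la(D)$ over compact $D \se U \sm K_+$ (as in condition \ref{SDTM2}) yields the pinch $-\lamin(U) \le \la(U \sm K_+) \le \eps$, and the additive identity then gives $\la(U) \ge \laplu(U) - \lamin(U) - \eps$. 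The symmetric construction with a compact $K_-$ satisfying $-\la(K_-) > \lamin(U) - \eps$ produces the matching upper bound $\la(U) \le \laplu(U) - \lamin(U) + \eps$, and letting $\eps \to 0$ yields (I).

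For part (II), the inequality $|\la|(U) \le \laplu(U) + \lamin(U)$ is immediate: for any disjoint collection $\bsc_{i=1}^n K_i \se U$, partition the indices by the sign of $\la(K_i)$ into $I^+$ and $I^-$, form the disjoint compact unions $K^+ = \bsc_{i \in I^+} K_i$ and $K^- = \bsc_{i \in I^-} K_i$, and compute $\sum_{i=1}^n |\la(K_i)| = \la(K^+) + (-\la(K^-)) \le \laplu(U) + \lamin(U)$. For the reverse inequality on open $U$, fix $K_+$ as in part (I); since $\laplu(U \sm K_+) \le \eps$, applying part (I) to the open set $U \sm K_+$ and combining the resulting expression $\la(U \sm K_+) = \laplu(U\sm K_+) - \lamin(U \sm K_+)$ with the identity $\la(U) = \la(K_+) + \la(U \sm K_+)$ and part (I) for $U$ itself extracts $\lamin(U \sm K_+) \ge \lamin(U) - 3\eps$. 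A compact $K_- \se U \sm K_+$ nearly realizing $\lamin(U \sm K_+)$ is then automatically disjoint from $K_+$, giving $|\la(K_+)| + |\la(K_-)| \ge \laplu(U) + \lamin(U) - 5\eps$. Outer regularity extends the identity from $\ox$ to $\cx$.

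The hard part is the pinch step in part (I): passing from the two-sided estimate $-\lamin(U) \le \la(D) \le \eps$ for compact $D \se U \sm K_+$ to the same estimate on $\la(U \sm K_+)$ itself requires the $\tau$-smoothness-type approximation $\la(V) = \lim_C \la(C)$ on open $V$. This needs to be invoked from the structural context behind $\la$ (in particular, the role of condition \ref{SDTM2} when $\la$ is a signed deficient topological measure), rather than from the two explicitly stated hypotheses of the lemma alone.
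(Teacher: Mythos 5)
Your argument is correct and follows essentially the same route as the paper's. The concern you flag at the end --- that the pinch $-\lamin(U) \le \la(U \sm K_+) \le \eps$ needs the approximation $\la(V) = \lim_{C} \la(C)$ on open $V$, which is not among the two displayed hypotheses --- is a fair observation, but it puts you in exactly the same position as the paper: its proof opens with the inequality $\la(U \sm K) \ge -\lamin(U \sm K)$, i.e.\ the bounds $-\lamin \le \la \le \laplu$ on open sets imported from \cite{Butler:DTMLC}, which likewise rest on inner regularity of $\la$; this is an implicit standing assumption (always satisfied in the paper's applications, where $\la$ is a signed topological measure), and you invoke it exactly where the paper does. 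The only substantive difference is in part (II): for the lower bound $|\la|(U) \ge \laplu(U) + \lamin(U)$ the paper applies superadditivity of $|\la|$ to the pair $K$, $U \sm K$ together with $|\la|(A) \ge |\la(A)|$, whereas you produce two disjoint compact sets $K_+$ and $K_- \se U \sm K_+$ and feed them directly into the defining supremum of $|\la|(U)$; both versions hinge on the same computation (via part (I)) that $-\la$ is close to $\lamin(U)$ on $U \sm K_+$, and yours is marginally more self-contained since it avoids the auxiliary facts about $|\la|$ on open sets. Two minor tidying points: in part (II) first dispose of the case $\laplu(U) = \infty$ or $\lamin(U) = \infty$ via $|\la| \ge \laplu$ and $|\la| \ge \lamin$, since your application of part (I) to $U$ requires one of them to be finite; and your estimate $\lamin(U \sm K_+) \ge \lamin(U) - 3\eps$ in fact comes out as $\ge \lamin(U) - \eps$ once the algebra is carried through.
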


\begin{proof}
\begin{enumerate}[label=(\Roman*),ref=(\Roman*)]
\item 
Suppose $ \la$ does not assume $ - \infty$. Let $ U \in \ox$.
For any compact $K \se U$ we have
\begin{align} \label{laminKE}
\la(U) = \la(U\sm K) + \la(K) \ge - \lamin(U \sm K) + \la(K) \ge - \lamin(U) + \la(K)
\end{align}

Assume that  $ \lamin(U) < \infty$. There are two possibilities for $ \laplu(U)$.
Suppose that $ \laplu(U) <\infty$.
Given $\eps >0$,  we choose a compact set $ K \se U $ such that $ \la(K) > \laplu(U) - \eps$. Then from (\ref{laminKE})
\[ \la(U)  \ge - \lamin(U) + \la(K)  \ge -\lamin(U)  + \laplu(U) -\eps, \]
so $\la(U) \ge  \laplu(U) - \lamin(U)$. 
Since $\lamin(U) <  \infty$, we may apply the same argument to $-\la$ to get: 
$-\la(U) \ge ( -\la)^+ (U) - ((-\la)^-(U)) = \lamin(U)  - \laplu(U)$, 
i.e.  $\la(U) \le \laplu(U) - \lamin(U)$.
Therefore,   $\la(U) =  \laplu(U) - \lamin(U)$. 

Now suppose $ \laplu(U) = \infty$. For a natural number $n$ choose $ K \se U$ such that $\la(K) > n$. Then from  (\ref{laminKE})
we have
\[ \la(U)  \ge - \lamin(U) + \la(K) > n - \lamin(U). \]
Letting $ n \rightarrow \infty$ we obtain $ \la(U) = \infty$. We again have $\la(U) =  \laplu(U) - \lamin(U)$. 

\item
To prove that $| \la| = \laplu + \lamin$, by Lemma 10 in Section 2 of \cite{Butler:DTMLC} we only need 
to show that $| \la| \ge \laplu + \lamin$, and it is enough to check this inequality on open sets.
Let $U$ be open. By Lemma 10 in Section 2 of \cite{Butler:DTMLC} the equality is trivial if $\laplu(U) = 0$ or $\lamin(U) = 0$. 
So we assume that $\laplu(U) > 0$ and $\lamin(U) > 0$. 
If $\laplu(U) = \infty$ or $ \lamin(U) = \infty$ the statement holds because $|\la| \ge \laplu, |\la| \ge \lamin $.
Now assume that $0< \laplu(U), \lamin(U) < \infty$.
Given $\eps>0$, choose a compact $K$ such that $ \laplu(U) - \la(K) < \eps$ and $\la(K) >0$.
Note that $ \la(U\sm K) + \lamin(U) = \la(U) -\la(K) + \lamin(U) = \laplu(U) - \la(K) < \eps$, i.e.
$ - \la(U \sm K) > \lamin(U) - \eps$.
By superadditivity of $ | \la|$
\begin{align*}
| \la | (U) &\ge | \la | ( U \sm K) + | \la | (K) \\  
 & \ge  | \la(U\sm K)| + | \la(K) |  = | \la(U\sm K)|  + \la(K)  \\
& \ge - \la(U \sm K) + \la(K)  \ge   \lamin(U) - \eps + \laplu(U) - \eps,
\end{align*}  
i.e. $|\la| (U) \ge \laplu(U) + \lamin(U)$. Thus,  $| \la| \ge \laplu + \lamin$.
\end{enumerate}
\end{proof}

\begin{remark}
Lemma \ref{sdtmDEC} is related to parts 7 and 8 of Proposition 24 in \cite{OrjanAlf:CostrPropQlf}.
\end{remark}

\begin{corollary} \label{finiPMA}
Suppose $ \la$ is a signed topological measure, $A \in \ox \cup \cx$. 
\begin{enumerate}[label=(\roman*),ref=(\roman*)]
\item \label{obpa1} 
$ |\la(A)| < \infty$ implies $ \laplu(A), \lamin(A)$ are both finite or both infinite.
\item  \label{obpa2}
If $ \la$ is such that at least one of $ \laplu(X), \lamin(X)$ is finite, then
$ |\la(A)| < \infty$ if and only if $ \laplu(A), \lamin(A) < \infty$.
\end{enumerate}
\end{corollary}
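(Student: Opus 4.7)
The plan is to reduce both parts to Lemma \ref{sdtmDEC}(I)---which gives $\la(U) = \laplu(U) - \lamin(U)$ on any open set $U$ where at least one of the two variations is finite---combined with the monotonicity and outer regularity of the deficient topological measures $\laplu$ and $\lamin$ (Remark \ref{lapldtm1}).

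For part (i), I would argue by contradiction. Assume $|\la(A)| < \infty$ and, by symmetry, that $\laplu(A) = \infty$ while $\lamin(A) < \infty$. If $A \in \ox$, Lemma \ref{sdtmDEC}(I) directly yields $\la(A) = +\infty$, a contradiction. If $A \in \cx$, outer regularity of $\lamin$ produces an open $U_0 \supseteq A$ with $\lamin(U_0) < \infty$, while monotonicity of $\laplu$ forces $\laplu(V) = \infty$ for every open $V \supseteq A$. Hence, for every open $V$ with $A \se V \se U_0$, Lemma \ref{sdtmDEC}(I) applies and yields $\la(V) = +\infty$. Such $V$ form a cofinal family of open neighbourhoods of $A$ (since $W \cap U_0$ lies in the family for any open $W \supseteq A$), so by uniqueness of net limits the limit in condition \ref{STM3} must also equal $+\infty$, contradicting $|\la(A)| < \infty$.

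For part (ii), assume without loss of generality that $\lamin(X) < \infty$; the other case is symmetric. Monotonicity of $\lamin$ then gives $\lamin(A) < \infty$ for every $A \in \ox \cup \cx$. The forward implication is immediate from part (i): the two variations cannot both be infinite, so $\laplu(A)$ must also be finite. For the converse, suppose $\laplu(A), \lamin(A) < \infty$. If $A \in \ox$, Lemma \ref{sdtmDEC}(I) gives $\la(A) = \laplu(A) - \lamin(A) \in \R$. If $A \in \cx$, outer regularity of $\laplu$ supplies an open $U \supseteq A$ with $\laplu(U) < \infty$, and then for every open $V$ with $A \se V \se U$ we have $|\la(V)| \le \laplu(U) + \lamin(X) < \infty$ by Lemma \ref{sdtmDEC}(I); since such $V$ are cofinal among open neighbourhoods of $A$, condition \ref{STM3} forces $\la(A) \in \R$.

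The only real subtlety is the cofinal-subnet argument, which is routine once one observes that intersecting any open neighbourhood of $A$ with $U_0$ (respectively $U$) yields a neighbourhood inside the restricted family; beyond that there is no genuine obstacle.
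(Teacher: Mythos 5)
Your proof is correct and follows essentially the same route as the paper: part (i) for open sets is read off from Lemma \ref{sdtmDEC}\ref{firpaU}, the closed-set case is reduced to the open case via outer regularity of the variations together with the net condition \ref{STM3}, and part (ii) combines part (i) with Lemma \ref{sdtmDEC}\ref{firpaU} once more. The only cosmetic difference is that you handle the closed-set case of (i) by contradiction with a cofinal-subnet argument, whereas the paper runs a direct case analysis over the open neighbourhoods $V$ with $F \se V \se U$ on which $\la$ is finite; both arguments rest on exactly the same ingredients.
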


\begin{proof}
\begin{enumerate}[label=(\roman*),ref=(\roman*)]
\item 
Let $U$ be open, $ | \la(U) | < \infty$. 
If exactly one of $\laplu(U), \lamin(U)$ were finite, it would contradict part \ref{firpaU} of Lemma \ref{sdtmDEC}. 
So the corollary holds for open sets. 
Suppose $F \in \cx$ and $ \la(F) \in \r$. There is an open set $U$ containing $F$ for which
$ \la(V) \in \r$ for any $V \in \ox$ with $ F \se V \se U$. If there is $V$ with $\laplu(V), \lamin(V) < \infty$ then 
$ \laplu(F) \le \laplu(V) < \infty$, and also $ \lamin(F) < \infty$. If for all  $V$ both $ \laplu(V), \lamin(V)$ are infinite, then 
also $ \laplu(F),\lamin(F)$ are infinite.
\item
Since at least one of $ \laplu, \lamin$ is finite, the direction ($\Longrightarrow$)  follows from part \ref{obpa1}. 
The direction ($ \Longleftarrow$) for an open set follows from part \ref{firpaU} of Lemma \ref{sdtmDEC}, 
and then is easily checked for a closed set. 
\end{enumerate}
\end{proof} 

\begin{theorem} \label{nupluTh}
Suppose $ \la $ is a signed topological measure.
\begin{enumerate}[label=(\Roman*),ref=(\Roman*)]
\item
The positive variation $\laplu $  is the unique 
smallest deficient topological measure such that $ \laplu \ge \la$, 
and  the negative variation $\lamin $  is the unique largest deficient topological measure 
such that $ - \lamin \le \la$; also, $ |\la| = \laplu + \lamin$. 
\item
If at least one of $ \laplu, \lamin$ is finite (in particular, if $ \norm \la \norm < \infty$) then also $ \la = \laplu - \lamin$.
\end{enumerate}
\end{theorem}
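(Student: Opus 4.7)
The plan is to deduce both parts from the machinery already established. For part (I), observe that a signed topological measure is in particular a signed deficient topological measure, so Remark \ref{lapldtm1} applies: it tells us that $\laplu, \lamin, |\la|$ are deficient topological measures, that $\laplu$ is the unique smallest DTM dominating $\la$, and that $\lamin$ is the unique largest DTM with $-\lamin \le \la$. The remaining equality $|\la| = \laplu + \lamin$ will follow from part (II) of Lemma \ref{sdtmDEC}, provided we verify its hypotheses for $\la$. Finite additivity on $\kx$ is immediate from axiom \ref{STM1}, and for any open $U$ and compact $K \se U$ the decomposition $U = K \sc (U \sm K)$ (with $U \sm K \in \ox$) gives $\la(U) = \la(K) + \la(U \sm K)$ by \ref{STM1}. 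So Lemma \ref{sdtmDEC}(II) yields $|\la| = \laplu + \lamin$ on all of $\ox \cup \cx$.

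For part (II), I would apply Lemma \ref{sdtmDEC}(I). The hypothesis there is that at least one of $\laplu(U), \lamin(U)$ is finite; this is guaranteed for every open $U$ because by assumption at least one of $\laplu(X), \lamin(X)$ is finite, and both $\laplu, \lamin$ are monotone (being DTMs). Hence
\begin{equation*}
\la(U) = \laplu(U) - \lamin(U) \quad \text{for every } U \in \ox.
\end{equation*}
To push this equality to closed sets $F$, I would use outer regularity: by \ref{STM3}, $\la(F) = \lim_{U \supset F} \la(U)$ along the net of open sets containing $F$, while for the DTMs $\laplu$ and $\lamin$ the analogous convergence $\laplu(F) = \lim \laplu(U)$ and $\lamin(F) = \lim \lamin(U)$ holds by outer regularity combined with monotonicity (the nets are monotone decreasing). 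Since at least one of the nets $\laplu(U), \lamin(U)$ is uniformly finite, we may pass to the limit in $\la(U) = \laplu(U) - \lamin(U)$ to conclude $\la(F) = \laplu(F) - \lamin(F)$, which finishes the proof.

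The main obstacle I expect is bookkeeping around the extended-real values: making sure no expression $\infty - \infty$ appears when writing $\laplu - \lamin$, and that the limit operations on the closed-set side are justified without ambiguity. This is precisely why the hypothesis ``at least one of $\laplu, \lamin$ is finite'' is imposed; it guarantees one of the two variations is a finite DTM, uniformly bounding one of the two nets. A smaller technical point is confirming that Lemma \ref{sdtmDEC} is applicable, which reduces to the additivity identity for $U$ and $K \se U$ noted above; everything else is then a direct appeal to previously established results.
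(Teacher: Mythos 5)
Your proposal is correct and follows essentially the same route as the paper: part (I) is quoted from Remark \ref{lapldtm1} together with Lemma \ref{sdtmDEC} (whose hypotheses hold for a signed topological measure by \ref{STM1}), and part (II) is obtained from Lemma \ref{sdtmDEC}(I) on open sets and then transferred to closed sets by passing to the limit along the net of open neighborhoods. The only cosmetic difference is that the paper splits the closed-set step into the cases $\la(F)=\infty$ and $\la(F)\in\R$ using Corollary \ref{finiPMA}, whereas you handle both at once by noting that one of the two nets is uniformly bounded; both versions are sound.
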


\begin{proof}
\begin{enumerate}[label=(\Roman*),ref=(\Roman*)]
\item
Follows from Remark \ref{lapldtm1} and Lemma \ref{sdtmDEC}.
\item
Assume that at least one of $ \laplu, \lamin$ is finite. We shall show that $ \la = \laplu - \lamin$.
Without loss of generality we may assume that $\la$  does not assume $-\infty$.
Note that $ \lamin(X) < \infty$, for otherwise we would have $ \laplu(X) < \infty$, 
and by part \ref{firpaU} of Lemma \ref{sdtmDEC} $ \la(X) = -\infty$. 
Assume that $ \laplu(X) = \infty$ (The case $ \laplu(X) < \infty$ is similar but simpler).
By part \ref{firpaU} of Lemma \ref{sdtmDEC} the equality 
$ \la = \laplu - \lamin$ holds for open sets. Let $F \in \cx$.  We have $ \lamin(F) < \lamin(X) < \infty$.
If $ \la(F) = \infty$ then from part \ref{obpa2}  of Corollary \ref{finiPMA} we see that $ \laplu(F) = \infty$. Then 
$ \la(F) = \laplu(F) - \lamin(F)$. Now suppose $  \la(F) \in \r$. There exists $W \in \ox$ such that $ \la(U) \in \r$ for all 
$ U \in \ox, \, F \se U \se W$. 
By part \ref{obpa2} of Corollary \ref{finiPMA} $ \laplu(F), \lamin(F), \laplu(U), \lamin(U) \in \r$.
Then $\la(F) =  \lim \la(U)  =  \lim ( \laplu(U) - \laplu(U)) = \lim \laplu(U) - \lim \lamin(U) =\laplu(F)  - \lamin(F)$.
\end{enumerate}
\end{proof}

\section{Decomposition of signed topological measures into topological measures}

The following is Theorem 5 in \cite{Butler:TechniqLC}. 

\begin{theorem}  \label{DTMnupLC}
Let $X$ be a locally compact, connected, locally connected space whose one-point compactification has genus 0. 
Let $\nu$ be a deficient topological measure on $X$ such that $\nu(X) < \infty$ and let $p \in X$ be an arbitrary point. 
Define a set function $\nu_p : \bosx \cup \ksx \rightarrow [0, \infty)$ by
\begin{eqnarray*}
  \nu_p(A) & = &
  \left\{
  \begin{array}{rl}
  \nu(A), & \mbox{ if }  p \notin A \\
  \nu(X) - \nu(X \sm A) , & \mbox{ if } p \in A
  \end{array}
  \right.
\end{eqnarray*} 
Then 
$\nu_p$ is a solid set function and,  hence, extends to a topological measure on $X$. 
\end{theorem}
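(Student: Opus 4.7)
The plan is to verify that $\nu_p$ satisfies the axioms of a \emph{solid set function} on $X$ in the sense of \cite{Butler:TMLCconstr}, and then to invoke the extension theorem there, which asserts that on a locally compact, connected, locally connected space whose one-point compactification has genus $0$, every solid set function on $\bosx \cup \ksx$ extends uniquely to a topological measure on $\ox \cup \cx$. The role of the base point $p$ is to break the symmetry between a bounded solid set and its (connected, unbounded) complement: informally, $\nu_p$ records the $\nu$-mass on the side of the solid boundary that does not contain~$p$.

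First I would check well-definedness and finiteness. For any $A \in \bosx \cup \ksx$, monotonicity and finiteness of $\nu$ give $0 \le \nu(A) \le \nu(X)$ and $0 \le \nu(X \sm A) \le \nu(X)$, so both branches of the defining formula take values in $[0,\infty)$. It should then be straightforward to see that the two branches agree on any set on which both could apply in principle (e.g.\ when checking consistency with small neighborhoods of $p$), and that trivial cases like $A = \emptyset$ produce $0$.

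Next I would verify the two axioms of a solid set function: the regularity condition linking $\ksx$ and $\bosx$, and the ``solid additivity'' identity saying that if a bounded open solid set $U$ decomposes as $U = C \sqcup V_1 \sqcup \cdots \sqcup V_m$ with $C \in \ksx$ and $V_j \in \bosx$, then $\nu_p(U) = \nu_p(C) + \sum_{j=1}^m \nu_p(V_j)$. The regularity condition is inherited from the inner compact regularity and outer regularity of $\nu$, where the case $p \in A$ is handled by applying regularity to the complement. The genus-$0$ hypothesis on the one-point compactification is exactly what makes the additivity tractable: for any compact solid $C \subset X$ the complement $X \sm C$ is connected (a single unbounded component), and dually for bounded open solid sets, so the decompositions under consideration have the clean tree-like structure needed.

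The main obstacle is the case analysis for additivity, driven by where $p$ lies. Exactly one of the pieces $C, V_1, \ldots, V_m$ contains $p$ (if $p \notin U$, none do, and then the identity reduces to $\nu(U) = \nu(C) + \sum \nu(V_j)$, which follows from finite additivity of $\nu$ on compact sets together with the regularity/decomposition structure coming from genus~$0$). When $p$ lies in, say, $C$, I would rewrite $\nu_p(C) = \nu(X) - \nu(X \sm C)$, note that $X \sm C = (X \sm U) \sqcup V_1 \sqcup \cdots \sqcup V_m$ with each $V_j$ disjoint from a closed ``outer'' piece, and use finite additivity of $\nu$ on compact approximants together with its regularity to telescope the expression back to $\nu(U) - \sum \nu(V_j)$; substituting $\nu(U) = \nu_p(U)$ (since $p \in C \subset U$ forces us to compare with the complementary expression on $U$, giving another $\nu(X)$ that cancels) yields the claimed identity. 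The case $p \in V_j$ is symmetric. The cancellation is clean precisely because genus~$0$ forces each complement in the chain to have a single connected component, so no stray $\nu(X)$ terms survive.

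Once these axioms are established, the theorem follows immediately by applying the extension construction of \cite{Butler:TMLCconstr}: the solid set function $\nu_p$ extends (uniquely) to a topological measure on $X$.
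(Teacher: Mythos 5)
First, a structural point: the paper does not prove this statement at all --- it is imported verbatim as Theorem 5 of \cite{Butler:TechniqLC} --- so there is no internal proof to compare against, and your sketch must stand on its own. Your overall strategy (verify the axioms of a solid set function on $\bosx \cup \ksx$, then invoke the extension theorem of \cite{Butler:TMLCconstr}) is certainly the intended one, and the finiteness/regularity checks you describe are routine.

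The genuine gap is in the additivity axiom, which is the entire content of the theorem. You assert that when $p$ lies in none of the pieces, the partition identity ``reduces to $\nu(U)=\nu(C)+\sum_j\nu(V_j)$, which follows from finite additivity of $\nu$ on compact sets together with the regularity/decomposition structure coming from genus~0.'' But $\nu$ is only a \emph{deficient} topological measure: on a disjoint union of compact and open pieces it is merely superadditive, and the failure of the reverse inequality is precisely what separates deficient topological measures from topological measures (the paper's Lemma \ref{SDTMnSTMex} is built on the existence of $U$ and $C\se U$ with $\nu(U)>\nu(C)+\nu(U\sm C)$). Nor can one recover the missing inequality by outer-regularizing the compact pieces: the pieces of a solid partition share boundary, so any open neighborhoods of the compact pieces overlap the adjacent open pieces, and deficient topological measures are not subadditive on overlapping open sets (Example \ref{STMnotSM}). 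Thus the heart of the matter --- that solidity of every piece, the genus-$0$ hypothesis, the inequality $\nu(A)+\nu(X\sm A)\le\nu(X)$, and the $p$-adjustment together force equality on solid partitions --- is exactly what your sketch asserts rather than proves; the same unproved additivity is what your ``telescoping'' in the case $p\in C$ silently relies on, applied to the mixed partition of $X\sm C$. Two smaller inaccuracies: by the paper's definition a solid set's complement need only have all components unbounded, so connectedness of $X\sm C$ for $C\in\ksx$ is a lemma (proved through the one-point compactification), not a definitional fact; and the partition axiom in \cite{Butler:TMLCconstr} concerns a \emph{compact} solid set partitioned into compact solid and bounded open solid pieces, not a bounded open solid ambient set as you have written it.
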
 

\begin{theorem} \label{STMdecom}
Suppose $X$ is a connected, locally connected, locally compact (non-compact) space whose one-point compactification has genus 0. 
Let $\nu$ be a signed topological measure with finite norm on $X$.  
Then $\nu$ can be represented as a difference of two topological measures.
\end{theorem}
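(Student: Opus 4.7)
The plan is to build the two topological measures explicitly from the Jordan-type decomposition given by Theorem \ref{nupluTh} and then apply the ``solid reduction'' construction from Theorem \ref{DTMnupLC} to each piece separately. More concretely: since $\norm \nu \norm < \infty$, Remark \ref{lapldtm1} gives that both $\nuplu$ and $\numin$ are finite deficient topological measures, and Theorem \ref{nupluTh} yields the decomposition $\nu = \nuplu - \numin$ as signed (deficient) topological measures. This already writes $\nu$ as a difference of two deficient topological measures, so the only real work is to upgrade each summand to an honest topological measure without destroying the identity.

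Fix any point $p \in X$. Since the ambient hypotheses on $X$ are exactly those of Theorem \ref{DTMnupLC}, I would apply that theorem to each of the finite deficient topological measures $\nuplu$ and $\numin$ to produce topological measures
\[
\mu_1 := (\nuplu)_p, \qquad \mu_2 := (\numin)_p
\]
on $X$. Both are finite topological measures, hence $\mu_1 - \mu_2$ lies in the linear space of real-valued signed set functions; a quick check (finite additivity on disjoint unions from $\kx \cup \ox$, plus $\tau$-smoothness inherited from finiteness via Theorem \ref{STMsmoo}) shows that $\mu_1 - \mu_2$ is in fact a real-valued signed topological measure.

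Now I invoke the uniqueness criterion Theorem \ref{STMSolid}: it suffices to verify $\nu(A) = \mu_1(A) - \mu_2(A)$ for every compact solid set $A \in \ksx$. This splits into two cases following the definition of $(\cdot)_p$. If $p \notin A$, then $\mu_i(A) = \nu^{\pm}(A)$ and
\[
\mu_1(A) - \mu_2(A) = \nuplu(A) - \numin(A) = \nu(A),
\]
using Theorem \ref{nupluTh}. If $p \in A$, then
\begin{align*}
\mu_1(A) - \mu_2(A) &= \bigl[\nuplu(X) - \nuplu(X \sm A)\bigr] - \bigl[\numin(X) - \numin(X \sm A)\bigr] \\
&= \nu(X) - \nu(X \sm A),
\end{align*}
and since $A \in \kx$, $X \sm A \in \ox$, and $X \in \ox$, axiom \ref{STM1} applied to $\nu$ gives $\nu(X) = \nu(A) + \nu(X \sm A)$, so the right side equals $\nu(A)$. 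Thus $\nu$ and $\mu_1 - \mu_2$ agree on $\ksx$, and Theorem \ref{STMSolid} finishes the proof.

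The main obstacle I anticipate is purely bookkeeping in the ``$p \in A$'' case: one has to be careful that $\nu$ really is additive on the pair $(A, X \sm A)$ and that the algebraic rearrangement is legitimate — which works precisely because $\norm \nu \norm < \infty$ makes all quantities finite. Everything else is assembly from the tools already in hand (Theorems \ref{nupluTh}, \ref{DTMnupLC}, \ref{STMSolid}), and note that the representation is evidently not unique, since the choice of the base point $p$ was arbitrary.
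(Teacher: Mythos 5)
Your proposal is correct and follows essentially the same route as the paper's own proof: decompose $\nu = \nuplu - \numin$ via Theorem \ref{nupluTh}, apply the base-point construction of Theorem \ref{DTMnupLC} to each variation, and verify agreement on compact solid sets (splitting on whether $p$ belongs to the set) before invoking Theorem \ref{STMSolid}. No gaps; the finiteness bookkeeping you flag is handled exactly as the paper does.
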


\begin{proof}
By Theorem \ref{nupluTh}, $\nu$ can be represented as the difference of two deficient topological measures,
$\nu = \nuplu - \numin$. By Remark \ref{lapldtm1}, $\nuplu(X) < \infty$ and $\numin(X) < \infty$.
Let $p \in X$. From $ \nuplu, \numin$ by Theorem \ref{DTMnupLC} obtain $\nu_1 = \nuplu_p$ and $\nu_2 = \numin_p$. 
Then $\nu_1, \nu_2$ are topological measures, and  $\mu = \nu_1 - \nu_2$  is a signed topological measure. 
We shall show that $\mu = \nu$.
If $ A $ is a bounded open solid set or a compact solid set and $p \notin A$, then using Theorem \ref{nupluTh}, we have:
$$ \mu(A) = \nu_1(A) - \nu_2(A) = \nuplu(A)  - \numin(A) = \nu(A).$$

Now let  $K$ be a compact solid set, $p \in K$. 
Since $ \nu_1(K) = \nuplu_p(K) = \nuplu(X) - \nuplu(X \sm K)$ and $ \nu_2(K) = \numin(X) - \numin(X \sm K)$, 
by Theorem \ref{nupluTh} we have:
\begin{align*}
\mu(K) =\nu_1(K) - \nu_2(K) =  \nu(X) - \nu(X \sm K) =  \nu(K).
\end{align*}

By  Theorem \ref{STMSolid} $\nu = \mu$.
\end{proof}

\begin{remark}
Theorem \ref{STMdecom} is a locally compact version of the decomposition of a signed topological measure with finite norm 
into a difference of two topological measures  when the underlying space is 
compact, Hausdorff, connected, locally connected, and has genus 0.
See Section 7 in \cite{OrjanAlf:CostrPropQlf}. 
\end{remark}

\begin{remark}
(a) From Theorem \ref{STMdecom} we see that the decomposition of a signed topological measure 
into a difference of two topological measures is not unique.
This nonÐuniqueness of decomposition of a signed topological measure into a difference of topological measures 
is also demonstrated in Example 25 in \cite{OrjanAlf:CostrPropQlf}, where a signed topological measure on a compact space
is written as a difference of topological measures in two different ways.   \\
(b) If $X$ satisfies the conditions of Theorem \ref{STMdecom}, then
the family of topological measures on $X$ is a generating cone for 
the family of signed topological measures  with finite norms. 
\end{remark}

{\bf{Acknowledgments}}:
The author would like to thank the Department of Mathematics at the University of California Santa Barbara for its supportive environment.


\end{document}